\newtheorem{theorem}{Theorem}[section]
\newtheorem{corollary}{Corollary}
\newtheorem{lemma}[theorem]{Lemma}
\newtheorem{proposition}{Proposition}
\theoremstyle{definition}
\newtheorem{remark}{Remark}
\newcommand\Ltwow{{L^2_{c^2/c_0^2}(\Omega)}}
\newcommand\Ltwo{{L^2(\Omega)}}
\newcommand\kappaintro{\kappa}
\newcommand\cutoff{\chi}
\newcommand\dkappa{\underline{d\kappa}}
\newcommand\db{\underline{db}}
\newcommand\dc{\underline{dc}}
\newcommand\Lapl[1]{\widehat{#1}}
\newcommand\acoeff{\mathfrak{a}}
\newcommand\rcoeff{\mathfrak{r}}
\newcommand\obsop{\mathcal{C}_\Sigma}
\newcommand\calAc{\mathcal{A}_c}
\newcommand{\nLtwo}[1]{\|#1\|_{L^2}}
\newcommand{\nLtwoLtwo}[1]{\|#1\|_{L^2 (L^2)}}
\newcommand{\normE}[1]{\|#1\|_{\mathcal{E}}}
\newcommand{\Honetwo}{H^2_\diamondsuit(\Omega)}
\newcommand\utau{u^\tau}
\newcommand\bconst{\bar{b}}
\title[Uniqueness of space dependent coefficients in the JMGT equation] 
      {Uniqueness of some space dependent coefficients in a wave equation of nonlinear acoustics}
\author[Barbara Kaltenbacher]{}
\subjclass{Primary: 35R30; Secondary: 35B30, 35Q99. }
 \keywords{nonlinearity parameter tomography, JMGT equation, nonlinear acoustics.}
 \email{barbara.kaltenbacher@aau.at}
\thanks{This work was supported by the Austrian Science Fund FWF  http://dx.doi.org/10.13039/501100002428 under the grant P36318}
\begin{document}
\maketitle

\centerline{\scshape Barbara Kaltenbacher}
\medskip
{\footnotesize
 \centerline{Department of Mathematics}
   \centerline{Alpen-Adria-Universit\"at Klagenfurt}
   \centerline{ Universit\"atsstra\ss e 65-67, 9020 Klagenfurt, Austria}
} 

\bigskip

 \centerline{(Communicated by the associate editor name)}

\begin{abstract}
In this paper we prove uniqueness for some parameter identification problems for the JMGT equation, a third order in time quasilinear PDE in nonlinear acoustics.
The coefficients to be recovered are the space dependent nonlinearity parameter, sound speed, and attenuation parameter, and the observation available is a single time trace of the acoustic pressure on the boundary.
This is a setting relevant to several ultrasound based tomography methods.
Our approach relies on the Inverse Function Theorem, which requires to prove that the forward operator is a differentiable isomorphism in appropriately chosen topologies and with an appropriate choice of the excitation. 
\end{abstract}

\section{Introduction}
The consideration and even exploitation of nonlinearity in ultrasound imaging has found much interest in the engineering and medical literature and is recently also starting to become a topic of mathematical research in inverse problems. 

We consider, as one of the advanced models of nonlinear acoustics, the Jordan-Moore-Gibson-Thompson (JMGT) equation \cite{Jordan2014,MooreGibson1960,Thompson1972}
\begin{equation}\label{JMGT}
\tau u_{ttt} + u_{tt}-c_0^2\Delta u - \tau c_0^2\Delta u  + D [u] = \kappaintro(u^2)_{tt} + r,
\end{equation}
where $u$ is the acoustic pressure, $c_0$ the speed of sound, 
$\tau$ the relaxation time, and $\kappaintro$ 
$=\frac{\beta_a}{\varrho c_0^2}=\tfrac{1}{\varrho c_0^2}(\tfrac{B}{2A}+1)$
contains the nonlinearity parameter $\beta_a$ or $B/A$ along with the sound speed $c_0$ and the mass density $\varrho$.
The JMGT equation can be written equivalently as a second order wave type euation in terms of an auxiliary quantity $\utau$
\begin{equation}\label{JMGTz}
\begin{aligned}
&\utau_{tt}-c_0^2\Delta \utau  + \tilde{D} [\utau,u] = \kappaintro(u^2)_{tt} + r,\\ 
&\utau=\tau u_t+u
\end{aligned}
\end{equation}
The choice of the damping term $D[u]$ or $\tilde{D}[\utau,u]$ is highly relevant for the degree of ill-posedness of inverse problems related to the PDE \eqref{JMGT}.
In view of the fact that strong damping makes the forward problem behave like a parabolic PDE, which renders the inverse problem severely ill-posed, we use weak damping only. A term of the classical weak damping form $D[u]=b_0u_t$ would not suffice to yield decay of the wave energy, though. We therefore employ weak damping in terms of the auxiliary quantity $\utau$
\begin{equation}\label{D}
\tilde{D}[\utau] = b_0 \utau_t, \text{ that is, } D[u] = b_0(\tau u_t+u)_t 
\end{equation}
with $b_0\geq0$ in this paper, while alternative (e.g., fractional) damping terms 
\cite{nonlinearity_imaging_fracWest} as relevant in ultrasonics might be studied in follow-up work.

This PDE contains several coefficients that are specific to the type of tissue. They thus have to be regarded as function of space and therefore provide a means of imaging.
For example, in ultrasound tomography 
\cite{AlsakerCardenasFuruieMueller2021,Gemmeke_etal:2017,Greenleaf_etal:1974,JavaherianLuckaCox20,LuckaPerezlivaTreebyCox2022,MuellerCardenasFuruie2021}, 
the sound speed $c_0=c_0(x)$ is the imaged quantity; in nonlinearity parameter tomography, it is $\kappaintro=\kappaintro(x)$
\cite{Bjorno1986, BurovGurinovichRudenkoTagunov1994, Cain1986, 
IchidaSatoLinzer1983, PanfilovaSlounWijkstraSapozhnikovMischi:2021, VarrayBassetTortoliCachard2011,ZhangChenGong2001, ZhangChenYe1996}.
Also the attenuation coefficient $b_0=b_0(x)$ is known to contain tissue specific information 
\cite{DineKak:1979,LiDuricHuang:2008,Perez-LivaCoxTreeby:2017}.
Mathematically, recovery of these coefficients from the typical observations available in this context, namely measurements of the acoustic pressure at some receiver array outside the body, leads to coefficient identification problems in a PDE of the type \eqref{JMGT} from boundary observations of the acoustic pressure $u$.

The pressure data $h$ taken at the receivers can be written as a Dirichlet trace on some manifold $\Sigma$ immersed in the domain $\Omega$ or attached to its boundary $\Sigma\in\overline{\Omega}$
\begin{equation}\label{observation}
h(t,x) = u(t,x), \quad(t,x)\in(0,T)\times \Sigma.
\end{equation}
$\Sigma$ models the transducer or hydrophone array and may as well just be a subset of discrete points on a manifold.

We assume \eqref{JMGT} to hold in a smooth and bounded domain 
$\Omega\subseteq\mathbb{R}^d$, $d\in\{1,2,3\}$ and equip it with initial conditions $u(t=0)=u_0$, $u_t(t=0)=u_1$, $u_{tt}(t=0)=u_2$, as well as homogeneous impedance boundary conditions 
\begin{equation}\label{impedance}
\gamma_0 u+\gamma_1\partial_\nu u = 0 \mbox{ on }(0,T)\times\partial\Omega
\end{equation}
with $\gamma_0,\gamma_1\in L^\infty(\partial\Omega)$, $\gamma_0,\gamma_1\geq0$, $\gamma_0+\gamma_1\geq\underline{\gamma}>0$; the case of vanishing $\gamma_1$ or $\gamma_0$ represents Dirichlet or Neumann conditions, respectively.
The space- and time-dependent 
source term $r$ in \eqref{JMGT} models
excitation, for example by a piezoelectric transducer array.  


We refer to \cite{AcostaUhlmannZhai:2022,
nonlinearity_imaging_Westervelt,nonlinearity_imaging_fracWest,nonlinearityimaging} for results related to the identification of the nonlinearity coefficient $\kappa$ alone in a classical model of nonlinear acoustics, the Westervelt equation (basically \eqref{JMGT} with vanishing relaxation time $\tau=0$).
In \cite{AcostaUhlmannZhai:2022} its uniqueness from the whole Neumann-Dirichlet map (instead of the single measurement \eqref{observation}) is shown; \cite{nonlinearityimaging} provides a uniqueness and conditional stability result for the linearized problem of identifying $\kappa$ in a higher order model of nonlinear acoustics in place of the  Westervelt equation.
In \cite{nonlinearity_imaging_Westervelt,nonlinearity_imaging_fracWest} we have proven injectivity of the linearized forward operator mapping $\kappa$ to $h$ in the Westervelt equation with classical strong damping and also with some fractional damping models as relevant in ultrasonics. 

A proof of injectivity of the linearized forward operator for the simultaneous recovery of $\kappa(x)$ and $c_0(x)$ in the Westervelt equation in dimension $d\in\{1,2,3\}$ from measurements with two excitations can be found in \cite{nonlinearity_imaging_both}, where it enables to apply a frozen Newton method and to show its convergence. For reconstruction results in 1-d we refer to \cite{nonlinearity_imaging_both} and in 2-d (based on a multiharmonic expansion for the Westervelt equation) to \cite{nonlinearity_imaging_2d}. 

A similar linearized uniqueness proof for JMGT in place of Westervelt will serve as a basis for proving local uniqueness of $\kappa$ by means of the Inverse Function Theorem here.
A considerable part of this paper is devoted to establishing the required well-definedness and differentiability of the forward operator.
Beyond this, the aim of this paper is to study simultaneous identification of $\kappa$ and $c_0$ or $b_0$ as space variable functions. Indeed, we will show that $\kappa(x)$ is locally unique even for unknown $c_0(x)$ and also provide results on simultaneous identifiability of $\kappa(x)$ and $c_0(x)$ or of $\kappa(x)$ and $b_0(x)$.

\subsection{The inverse problem}\label{sec:inverse}
The pointwise observation setting \eqref{observation} is an idealized one and will be extended to a more general observation operator $\obsop$, allowing generalization to, e.g., locally averaging observations, cf. \eqref{eqn:obs}.
Also the PDE model will be adapted to take into account the fact that for sufficiently small pressure amplitudes, nonlinearity can be neglected, cf. \eqref{eqn:JMGT_init_D_intro}.

Therewith we consider identification of the space dependent coefficients $\kappa(x)$, $c_0(x)$, $b_0(x)$, 
in the attenuated and switched JMGT equation in pressure form 
\begin{equation}\label{eqn:JMGT_init_D_intro}
\begin{aligned}
\tau u_{ttt}+\bigl((1-2\kappa\sigma u)u_t\bigr)_t+c^2\calAc u +\tau c^2\calAc u_t
+ b_0 (\tau u_t+u)_t &= r \quad t>0\\
u(0)=u_0, \quad u_t(0)=u_1, \quad u_{tt}(0)&=u_2,
\end{aligned}
\end{equation}
from observations
\begin{equation}\label{eqn:obs}
h(t)=\obsop u(t), 
\quad t\in(0,T).
\end{equation}
with some linear and bounded observation operator $\obsop:V\mapsto Y$, mapping from a space $V$ of $x$-dependent functions to some data space $Y$. Typical examples are
\[
\begin{aligned}
&\obsop:H^s(\Omega)\to L^2(\Sigma)&& v\mapsto \textup{tr}_\Sigma v \\
&\obsop:C(\overline{\Omega})\to\mathbb{R}^N && v\mapsto \Bigl(v(x_i)\Bigr)_{i\in\{1,\ldots,N\}}\\
&\obsop:L^2(\Omega)\to\mathbb{R}^N && v\mapsto \Bigl(\int_\Omega \eta_i(x) v(x)\, dx\Bigr)_{i\in\{1,\ldots,N\}}
\end{aligned}
\]
with the trace operator $\textup{tr}_\Sigma: H^s(\Omega)\to L^2(\Sigma)$, $s>\frac12$, measurement locations $x_i\in\overline{\Omega}$ and $\eta_i\in L^2(\Omega)$ locally supported around $x_i$.
In each of these examples, applicability of $\obsop$ to $u$ sets different spatial regularity requirements on the PDE solution.

In equation \eqref{eqn:JMGT_init_D_intro},
$c>0$ is the constant mean wave speed, and we make use of the combined elliptic operator
\begin{equation}\label{eqn:mathcalA}
\calAc=-\frac{c_0(x)^2}{c^2} \Delta \ \textup{ with }\  c_0,\ \frac{1}{c_0}\ \in L^\infty(\Omega)
\end{equation}
that contains the possibly spatially varying coefficient $c_0(x)>0$ and is equipped with the boundary conditions \eqref{impedance}. To achieve self-adjointness of $\calAc$, we use the weighted $L^2$ inner product with weight function $c^2/c_0^2(x)$, that is, 
$\|v\|_\Ltwow=\int_\Omega\frac{c^2}{c_0^2(x)}\, v^2(x)\, dx$.
We can think of $c_0$ as being spatially variable and of $\frac{c_0}{c}\sim 1$
to be normalized, while the magnitude of the wave speed is given by the
constant $c$.
Due to compactness of its inverse, the operator $\calAc$ has an
eigensystem $(\lambda_j,(\varphi_j^k)_{k\in K^{\lambda_j}})_{j\in\mathbb{N}}$
(where $K^{\lambda_j}$ is an enumeration of the eigenspace corresponding to
$\lambda_j$).
This allows for a diagonalization of the operator as
$(\calAc v)(x) = \sum_{j=1}^\infty \lambda_j\sum_{k\in K^{\lambda_j}} \langle v,\varphi_j^k\rangle \varphi_j^k(x)$.

\begin{remark}\label{rem:varrho}
Taking into account spatial variability of the mass density, \eqref{JMGT} would need to be written as 
$\tfrac{\tau}{\lambda(x)}u_{ttt}+\frac{1}{\lambda(x)}u_{tt}-\nabla\cdot(\frac{1}{\varrho(x)}\nabla u) -\tau\nabla\cdot(\frac{1}{\varrho(x)}\nabla u_t) +D[u] = \kappaintro(u^2)_{tt}+r$ with $u$ being the pressure, $\lambda$ the bulk modulus, $\varrho$ the mass density, and $c_0=\frac{\lambda}{\varrho}$ the sound speed, (cf., e.g., \cite{BambergerGlowinskiTran,LuckaPerezlivaTreebyCox2022} for the linear case). Usually $x$-dependence of $\varrho$ is not taken into account; Note that our general setting with a spatial differential operator $\calAc$ would be able to cover this dependency as well.
\end{remark}

While $\kappaintro$, $c_0$, $b_0$,  are positive, possibly space dependent coefficients, the relaxation time $\tau\geq0$ is constant and the switching factor $\sigma$ $\in[0,1]$ depends on the magnitude of the pressure and in particular vanishes for small $u$, see \eqref{eqn:sigma} for its precise definition.

\bigskip

It was shown in, e.g., \cite{nonlinearity_imaging_Westervelt,nonlinearity_imaging_fracWest}, that essential information on the space-dependent coefficients is contained in the residues at the poles of the Laplace transformed observations $\widehat{h}$. This, in its turn, via the elementary identity 
\begin{equation}\label{eqn:limitpoles}
\begin{aligned}
\textup{Res}(\widehat{h},p)&=\lim_{z\to p}(z-p)\widehat{h}(z)
=\lim_{z\to p}\widehat{h'-ph}(z)+h(0)\\
&=\lim_{T\to\infty}\lim_{z\to p}\int_0^T e^{-izt} (h'(t)-ph(t))\, dt +h(0)\\
&=\lim_{T\to\infty}\int_0^T \frac{d}{dt}\Bigl(e^{-ipt} h(t)\Bigr)\, dt +h(0)
=\lim_{T\to\infty} e^{-ipT} h(T),
\end{aligned}
\end{equation}
(provided absolute convergence holds, which allows us to interchange the integrals), leads us to study the asymptotics of solutions as time tends to infinity.

Correspondingly, in the uniqueness proofs of Section~\ref{sec:uniqueness} we will make use of the forward operator $\mathbb{F}$ that takes $\kappa$ to the residues of the Laplace transform $\widehat{\obsop u}$  at the poles of the Laplace transformed observation $\widehat{h}$.
In order to be able to take Laplace transforms, throughout Section~\ref{sec:uniqueness}  we will assume to have observations on the whole positive timeline, which in case of $r$ being analytic with respect to time (for example, just vanishing) from a time instance $\underline{T}$ on, follows by analytic continuation of the Fourier components of $u$, which will be shown to satisfy linear 
ODEs from a time instance $T_*$ on in Section~\ref{sec:uniqueness}.

\medskip

Continuous Fr\'{e}chet differentiability of the forward operator is an essential ingredient of the Inverse Function Theorem, that we plan to employ in our uniqueness proof. Concerning well-posedness of the JMGT equation we can largely rely on existing results in the literature, see, e.g. \cite{BongartiCharoenphonLasiecka20,BucciEller:2021,KLM12_MooreGibson,KLP12_JordanMooreGibson, fracJMGT,MarchandMcDevittTriggiani12,NikolicSaidHouari21a,
NikolicSaidHouari21b,pellicer2019,RackeSaid-Houari:2021}, in particular \cite{b2zeroJMGT} for the undamped case, which is relevant for the weak (rather than strong) damping we are employing here. Indeed, a proof of Fr\'{e}chet differentiability would fail for the Westervelt equation in the absence of strong damping, due to the loss of regularity arising in that case, see, e.g.,  \cite{DoerflerGernerSchnaubelt2016,b2zero}.
While well-posedness in the above mentioned references is inherently local in time in the above cited references, the switching allows us to prove global in time well-posedness of \eqref{eqn:JMGT_init_D_cutoff} in Section~\ref{sec:forward}, which is essential for the evaluation of residues according to \eqref{eqn:limitpoles}.

Section~\ref{sec:uniqueness} then contains several uniqueness results that rely on the previously shown differentiability and a proof of the linearized forward operator being an isomorphism in appropriately chosen topologies:
\begin{itemize}
\item local uniqueness of $\kappa$; 
\item linearized uniqueness of $\kappa$ and $c_0$;
\item linearized uniqueness of $\kappa$ and $b_0$;
\end{itemize}
from the single boundary observation \eqref{eqn:obs}.


\subsection*{Notation}
Below we will abbreviate $\Honetwo=H_0^1(\Omega)\cap H^2(\Omega)$ and make use of the spaces $\dot{H}^{s}(\Omega)$ induced by the norm 
\begin{equation}\label{sobolev_norm}
\|v\|_{H^s(\Omega)}=\Bigl(\sum_{j=1}^\infty \lambda_j^s\sum_{k\in K^{\lambda_j}}
 |\langle v,\varphi_j^k\rangle|^2\Bigr)^{1/2}
\end{equation}
with the eigensystem $(\lambda_j,\varphi_j)$ of the operator $\calAc$.

Moreover, the Bochner-Sobolev spaces $L^p(0,T;Z)$, $H^q(0,T;Z)$ with $Z$ some Lebesgue or Sobolev space and $T$ a finite or infinite time horizon will be used.
By $L^p(X)$, $W^{s,p}(X)$ we abbreviate the Bochner spaces over the whole positive real line $L^p(0,\infty;X)$, $W^{s,p}(0,\infty;X)$. 

$\mathcal{B}_\rho(x_0)^X=\{x\in X\, : \, \|x\|_X\leq \rho\}$ denotes the closed ball with radius $\rho>0$ and center $x_0$ in the normed space $X$.

We denote the Laplace transform of a function $v\in L^1(0,\infty)$ by $\widehat{v}(z)=\int_0^\infty e^{-zt}v(t)\, dt$ for all $z\in\mathbb{C}$ such that this integral exists.

The ordinary $\Ltwo$ inner product and norm will be denoted by $|\cdot|_\Ltwo$, $(\cdot,\cdot)_\Ltwo$, while the weigthed $L^2$-inner product with weight $\tfrac{c^2}{c_0^2}$ (or more generally, the inner product in which $\calAc$ is symmetric) will be abbreviated by $\langle,\cdot,\cdot\rangle$.

Generic constants will be denoted by $C$ (possibly indicating in parentheses dependence on certain quantities). Some specific constants are $C_{\textup{PF}}$ as appearing in the Poincar\'{e}-Friedrichs inequality 
$|v|_\Ltwo\leq C_{\textup{PF}}|\nabla|_\Ltwo$, $v\in H_0^1(\Omega)$, and the norm $C_{H^s,L^p}^\Omega$ of the embedding $H^s(\Omega)\to L^p(\Omega)$.

\section{The forward problem}\label{sec:forward}
In this section we analyze the forward problem: first of all, in Section~\ref{sec:S}, the 
parameter-to-state map $S:(\kappa,c_0^2,b_0)\mapsto u$; then, in Section~\ref{sec:F} the total forward operator, which we define by mapping the state $u$ obtained from $S$ into the observations \eqref{eqn:obs} and then further into the residues at the poles of $\widehat{h}$. The latter, along with an appropriately defined topology, will be the right setting for proving uniqueness of $\kappa$ by means of the Inverse Function Theorem.

\subsection{Well-definedness and differentiability of the parameter-to-state map}
\label{sec:S}

With a smooth cutoff function $\cutoff:[0,\infty)\to[0,1]$ such that for some thresholds $0<\underline{m}<\overline{m}<\infty$
\begin{equation}\label{eqn:cutoff}
\cutoff([0,\underline{m}])=\{0\}, \quad \cutoff([\bar{m},\infty))=\{1\}, 
\quad \cutoff\in W^{1,\infty}(0,\infty), \quad \cutoff'\geq0\mbox{ a.e.}
\end{equation}
and
\begin{equation}\label{eqn:sigma}
\sigma(t)=\cutoff(|\utau(t)|_\Ltwo^2).
\end{equation}
we consider \eqref{eqn:JMGT_init_D_intro}, that is
\begin{equation}\label{eqn:JMGT_init_D_cutoff}
\begin{aligned}
\tau u_{ttt}+\bigl((1-2\kappa\sigma u)u_t\bigr)_t+c^2\calAc u +\tau c^2\calAc u_t
+ b_0 (\tau u_t+u)_t &= r \quad t>0\\
u(0)=u_0, \quad u_t(0)=u_1, \quad u_{tt}(0)&=u_2,
\end{aligned}
\end{equation}
where for simplicity of exposition the operator $\calAc=-\frac{c_0^2}{c^2}\Delta$ is equipped with homogeneous Dirichlet boundary conditions (while the more general case \eqref{impedance} follows analogously). 
Moreover, in \eqref{eqn:JMGT_init_D_cutoff} we have positive space-dependent coefficients $\kappa$, $c_0$, $b_0$, constants $\tau>0$, $c>0$, as well as the time dependent function $\sigma $ as defined in \eqref{eqn:sigma}.
The wave energy 
\begin{equation}\label{E0}
\mathcal{E}_0[\utau](t) := \frac12\Bigl(|\utau_t(t)|_\Ltwo^2+c^2 |\nabla \utau(t)|_\Ltwo^2\Bigr)
\end{equation}
of $\utau=\tau u_t+t$ and therewith $|\utau(t)|_\Ltwo^2$ will be shown to decay to zero as $t\to\infty$.
Thus, the nonlinearity will be switched off after a certain time instance $T_*$ and therefore the solution follows a linear weakly damped wave equation from that time instance on, so that we have full control over the pole locations and residues of the Laplace transform of its solution.

An important observation here is that $T_*$ can be steered by the magnitude of $b_0$; more precisely, $T_*$ can be made smaller by making $b_0$ larger, while the local existence time $T$ of \eqref{eqn:JMGT_init_D_cutoff} does not decrease with increasing $b_0$. This allows us to combine the local in time nonlinear existence interval with the time span of linear wave propagation to obtain global in time well-posedness and differentiability results for  \eqref{eqn:JMGT_init_D_cutoff}.

\bigskip

We first of all state an adaption of existing local in time well-posedness results for \eqref{eqn:JMGT_init_D_cutoff} to the situation of spatially variable coefficients and weak damping, see subsections~\ref{sec:wellposed_lin}, \ref{sec:wellposed}.

Additionally, in subsection~\ref{sec:E0decay}, we obtain exponential decay of the wave energy $\mathcal{E}_0[\utau]$ due to the persistent weak damping, 
which then implies that nonlinearity is switched off after some time instance $T_*$ and $u$ follows a linear weakly damped wave equation. This is essential for controlling its residues that according to \eqref{eqn:limitpoles} are governed by the asymptotics as $t\to\infty$.

Finally, in subsection~\ref{sec:diff}, we prove continuous differentiability of the forward operator in a norm dictated by the uniqueness proof.

Throughout this section we assume $\Omega$ to be a bounded $C^{1,1}$ domain so that we can make use of elliptic regularity according to, e.g., \cite[Theorem 2.4.2.5]{Grisvard}.
 
\subsubsection{Energy estimates and well-posedness of the linearization}\label{sec:wellposed_lin}
We start by analyzing a linear version of \eqref{eqn:JMGT_init_D_cutoff}
\begin{equation}\label{eqn:JMGT_init_D_cutoff_lin}
\begin{aligned}
\tau u_{ttt}+\bigl((1-\bar{\sigma}\acoeff)u_t\bigr)_t+c^2\calAc u +\tau c^2\calAc u_t
+ b_0 (\tau u_t+u)_t -\mu u_t -\eta u&= \rcoeff \quad t>0\\
u(0)=u_0, \quad u_t(0)=u_1, \quad u_{tt}(0)&=u_2,
\end{aligned}
\end{equation}
with 
\begin{equation}\label{eqn:sigma_lin}
\begin{aligned}
&\bar{\sigma} \in W^{1,\infty}(0,\infty), \quad 
\bar{\sigma}([0,\infty))\subseteq[0,1].
\end{aligned}
\end{equation}

Later on, in Theorem~\ref{thm:wellposed} we will use Banach's Contraction Principle to prove convergence of a fixed point iteration defined by $u_{n+1}$ solving \eqref{eqn:JMGT_init_D_cutoff_lin} with 
$\bar{\sigma}=\cutoff(|u_n(t)|_\Ltwo^2)$, 
and $\acoeff=2\kappa u_n$ towards the unique solution of \eqref{eqn:JMGT_init_D_cutoff}.

By adapting the proof of \cite[Proposition 3.1]{b2zeroJMGT} we obtain
\begin{proposition}\label{prop:wellposed_lin}
Let $T>0$, $c_0,\frac{1}{c_0}$
$b_0\in L^\infty(\Omega)\cap W^{1,3}(\Omega)$, $b_0\geq0$, 
$\mu\in L^\infty(0,T;\Honetwo)$,
$\eta\in L^\infty(0,T;H_0^1(\Omega))$, $\bar{\sigma}\in W^{1,\infty}(0,T)$.

Then there exists $\rho_\acoeff>0$ such that for any 
$\acoeff\in L^\infty(0,T;L^\infty(\Omega))\cap L^\infty(0,T;W^{1,3}(\Omega))\cap W^{1,\infty}(L^2(\Omega))$ 
satisfying 
\begin{equation}\label{smallness_prop}
\begin{aligned}
&\|\acoeff\|_{L^\infty(0,T;L^\infty(\Omega))}
+\|\nabla\acoeff\|_{L^\infty(0,T;L^3(\Omega))}
+\|(\bar{\sigma}\acoeff)_t\|_{L^\infty(0,T;L^2(\Omega))}\\
&+\|\nabla b_0\|_{L^3(\Omega)}
+ \|\mu\|_{L^\infty(0,T;H^2(\Omega)}
+ \|\nabla\eta\|_{L^\infty(0,T;L^2(\Omega)}
\leq \rho_\acoeff,
\end{aligned}
\end{equation}
and any $\rcoeff\in L^2(0,T; H_0^1(\Omega)) \cap L^\infty(0,T ;L^2(\Omega))$, 
$(u_0, u_1, u_2) \in \Honetwo\times\Honetwo\times H_0^1(\Omega)$
the initial boundary-value problem \eqref{eqn:JMGT_init_D_cutoff_lin} has a unique solution
\begin{equation}\label{eqn:U}
u \in U_T= W^{3,\infty}(0,T;\Ltwo)\cap W^{2,\infty}(0,T;H_0^1(\Omega))\cap W^{1,\infty}(0,T;\Honetwo)
\end{equation}
on $(0,T)$.	
Furthermore, this solution satisfies
\begin{equation}\label{enest_prop}
\begin{aligned}
&\nLtwo{u_{ttt}(t)}^2+\nLtwo{\nabla u_{tt}(t)}^2+\nLtwo{\Delta u_t(t)}^2+\nLtwo{\Delta u(t)}^2 \\
& \leq \tilde{C}(\tau)(1+T)\Bigl(
\nLtwo{\utau_{tt}(t)}^2+\nLtwo{\nabla \utau_t(t)}^2+\nLtwo{\Delta \utau(t)}^2\Bigr)\\
& \leq C(\tau)e^{K(\tau)(\rho_\acoeff^2+1)T}
\Bigl(\nLtwo{\nabla u_2}^2 + \nLtwo{\Delta u_1}^2  + \nLtwo{\Delta u_0}^2  +\nLtwoLtwo{\nabla \rcoeff}^2\Bigr),
\end{aligned}
\end{equation}	
for all $t\in(0,T)$, where the constants $C(\tau)$ and $K(\tau)$ tend to $+\infty$ as $\tau\to0^+$, but are independent of $\rho_\acoeff$, $b_0$, and the final time $T$.
\end{proposition}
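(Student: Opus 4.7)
The plan is to adapt the Galerkin/energy-method proof from \cite{b2zeroJMGT} to the present setting, with the key observation that the PDE \eqref{eqn:JMGT_init_D_cutoff_lin} can be recast, via the substitution $\utau=\tau u_t+u$, as a \emph{weakly damped linear wave equation} in $\utau$:
\begin{equation*}
\utau_{tt}+c^2\calAc\utau+b_0\utau_t = \rcoeff +(\bar\sigma\acoeff u_t)_t +\mu u_t+\eta u,
\end{equation*}
coupled with the first-order ODE $\tau u_t+u=\utau$ for $u$. Existence will come from a standard Faedo--Galerkin scheme in eigenfunctions of $\calAc$, with limit passage justified by the a priori bounds derived below; uniqueness follows from linearity and the same bounds applied to the difference of two solutions.

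The heart of the matter is the a priori estimate \eqref{enest_prop}. First I would test the $\utau$-equation (differentiated once in time) with $\utau_{tt}$ and integrate by parts, producing the energy quantity $\tfrac12(\|\utau_{tt}\|^2_{L^2}+c^2\|\nabla\utau_t\|^2_{L^2})$ plus a nonnegative dissipation term $b_0^{1/2}\utau_{tt}$ from the damping. A parallel test of the undifferentiated equation with $-c^2\calAc\utau_t$ (equivalently, use the elliptic regularity of $\calAc$ together with the boundary conditions to treat $\|\Delta\utau\|_{L^2}$) yields the missing top-order spatial term $c^4\|\calAc\utau\|^2_{L^2}\simeq \|\Delta\utau\|_{L^2}^2$. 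Summing these identities gives control of $\nLtwo{\utau_{tt}}^2+\nLtwo{\nabla\utau_t}^2+\nLtwo{\Delta\utau}^2$ up to the right-hand side terms.

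The delicate step is estimating the right-hand side. The term $(\bar\sigma\acoeff u_t)_t = (\bar\sigma\acoeff)_t u_t+\bar\sigma\acoeff u_{tt}$, paired with $\utau_{tt}=\tau u_{ttt}+u_{tt}$, must be absorbed into the left-hand energy; this is where the $L^\infty$ and $W^{1,3}$ assumptions on $\acoeff$ enter, combined with Sobolev embeddings $H^1\hookrightarrow L^6$ (valid for $d\le 3$) to bound products like $\nabla\acoeff\cdot\nabla u_t$ in $L^2$. The smallness threshold $\rho_\acoeff$ is chosen precisely so that the coefficient of the energy on the right is strictly less than the corresponding left-hand coefficient. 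The terms involving $\mu u_t$ and $\eta u$ are lower order and handled similarly using the stated regularity of $\mu,\eta$ together with $H^2\hookrightarrow L^\infty$. The inhomogeneity $\rcoeff$ contributes $\nLtwoLtwo{\nabla\rcoeff}^2$ after integration by parts against $\calAc\utau_t$. A Gronwall argument then yields the exponential factor $e^{K(\tau)(\rho_\acoeff^2+1)T}$ in \eqref{enest_prop}.

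Finally, having controlled the $\utau$-energy, the stated bound on $u$-quantities follows by integrating the ODE $\tau u_t=\utau-u$ in time (and using the corresponding ones for $u_t$, $u_{tt}$), which costs the factor $(1+T)$ in \eqref{enest_prop}, together with elliptic regularity to upgrade $\|\calAc u\|_{L^2}$ to $\|\Delta u\|_{L^2}$. The main obstacle will be the bookkeeping of mixed-norm bounds on the variable-coefficient products $(\bar\sigma\acoeff)_t u_t$ and $b_0\utau_t$ in the differentiated equation, since weak damping (as opposed to strong damping) rules out any parabolic gain; here the $W^{1,3}(\Omega)$ regularity of $b_0$ and $\acoeff$, invoked via Hölder together with $H^1\hookrightarrow L^6$, is exactly what allows closure of the estimate while keeping the constants independent of $b_0$ and $T$.
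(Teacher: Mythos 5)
Your overall strategy coincides with the paper's: rewrite \eqref{eqn:JMGT_init_D_cutoff_lin} as the weakly damped second-order equation \eqref{eqn:JMGT_init_D_cutoff_lin_z} in $\utau=\tau u_t+u$, run a Galerkin/energy argument adapted from \cite[Proposition 3.1]{b2zeroJMGT}, let the spatial integrations by parts generate the $L^3$/$W^{1,3}$ smallness requirements on $\nabla\acoeff$, $\nabla(\bar\sigma\acoeff)_t$, $\nabla b_0$, observe that the damping term contributes a sign-definite quantity to the left-hand side, and recover the $u$-norms from the ODE $\tau u_t+u=\utau$ at the cost of the factor $(1+T)$.

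The one step that does not go through as stated is your first energy identity: testing the \emph{time-differentiated} $\utau$-equation with $\utau_{tt}$ produces the pairing $(\rcoeff_t,\utau_{tt})_\Ltwo$ on the right-hand side, and the hypotheses only provide $\rcoeff\in L^2(0,T;H_0^1(\Omega))\cap L^\infty(0,T;L^2(\Omega))$ --- no control of $\rcoeff_t$ at all --- nor can you integrate by parts in time without a bound on $\utau_{ttt}$ that you do not yet possess. This is also why the final estimate \eqref{enest_prop} contains only $\nLtwoLtwo{\nabla\rcoeff}$. The paper avoids the issue by performing a single test of the undifferentiated equation with $-\Delta\utau_t$, which yields the time derivative of $\tfrac12\bigl(\nLtwo{\nabla\utau_t}^2+\nLtwo{c_0\Delta\utau}^2\bigr)$ plus the dissipation, with $\rcoeff$ entering only as $(\nabla\rcoeff,\nabla\utau_t)_\Ltwo$; the remaining quantity $\nLtwo{\utau_{tt}(t)}$ is then read off pointwise in time from the equation itself, using $\rcoeff\in L^\infty(0,T;L^2(\Omega))$ together with the already-controlled $\nLtwo{\Delta\utau(t)}$ and the lower-order terms. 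If you replace your first identity by this bootstrap, the rest of your argument (H\"older with $H^1(\Omega)\hookrightarrow L^6(\Omega)$ for the $\nabla\acoeff$ and $\nabla b_0$ products, smallness of $\rho_\acoeff$ for absorption, Gronwall for the exponential factor) closes essentially as you describe.
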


\begin{proof}
The proof relies on an energy estimate that is obtained by testing \eqref{eqn:JMGT_init_D_cutoff_lin}, that is, in terms of $\utau=\tau u_t+u$,
\begin{equation}\label{eqn:JMGT_init_D_cutoff_lin_z}
\begin{aligned}
&\utau_{tt}+c^2\calAc \utau +\tilde{b}_0 \utau_t -\tilde{\mu} \utau -\tilde{\eta} u = \rcoeff,\\
&\utau(0)=\tau u_1+u_0, \quad \utau_t(0)=\tau u_2+u_1,\\ 
&\text{where $\tilde{b}_0=b_0-\tfrac{1}{\tau}\bar{\sigma}\acoeff$, 
$\tilde{\mu}=\tfrac{1}{\tau}(\mu+(\bar{\sigma}\acoeff)_t-\tfrac{1}{\tau}\bar{\sigma}\acoeff)$,
$\tilde{\eta}= \eta-\tilde{\mu}$},
\end{aligned}
\end{equation}
with $-\Delta \utau_t=-\Delta (\tau \utau_t+\utau)_t$ and integrating by parts with respect to space in the first, third and fourth term, which results in the spatial differentiability requirements $\nabla\acoeff(t)$, $\nabla b_0$, $\nabla\acoeff_t(t)$ $\in L^3(\Omega)$ with sufficiently small norm.\\
The additional term $b_0 (\tau u_t+u)_t$ as compared to \cite[Proposition 3.1]{b2zeroJMGT} then clearly yields a nonnegative contribution to the left hand side.
\end{proof}

\subsubsection{Well-posedness of the switched JMGT equation}\label{sec:wellposed}
Analogously to \cite[Theorem 4.1]{b2zeroJMGT}, Banach's Contraction Principle yields local in time well-posedness of the nonlinear problem.

In view of the requirements \eqref{smallness_prop} with $\acoeff=2\kappa u$, we first of all consider the coefficient space
\begin{equation}\label{Xstrong_wellposed}
X_T=(L^\infty(\Omega)\cap W^{1,3}(\Omega))^3
\text{ for $(\kappa,c_0^2,b_0)$}. 
\end{equation}

\begin{theorem}\label{thm:wellposed}
Assume that $T>0$, $c>0$, $b>0$, $\rho>0$.

Then there exists $\rho_0>0$ such that for any $(\kappa,c_0^2,b_0)\in \mathcal{B}_\rho^{X_T}(0,c,b)$ and any excitation $r\in L^2(0,T; H_0^1(\Omega)) \cap L^\infty(0,T ;L^2(\Omega))$ as well as initial data 
$(u_0, u_1, u_2) \in \Honetwo\times\Honetwo\times H_0^1(\Omega)$
with 
\begin{equation}\label{smalldata}
\|\rcoeff\|_{L^2(0,T; H_0^1(\Omega)) \cap L^\infty(0,T ;L^2(\Omega))}^2
+|\nabla u_2|_\Ltwo^2+|\Delta u_1|_\Ltwo^2+|\Delta u_0|_\Ltwo^2 \leq\rho_0,
\end{equation}
there exists a unique solution $u\in U_T$ (as defined in \eqref{eqn:U}) of \eqref{eqn:JMGT_init_D_cutoff}
and this solution satisfies the energy estimate \eqref{enest_prop}.
\end{theorem}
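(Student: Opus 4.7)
The plan is to realize $u$ as the unique fixed point of the map $\mathcal{T}: v \mapsto u$, where $u$ solves the linearization \eqref{eqn:JMGT_init_D_cutoff_lin} with $\mu=\eta=0$, $\rcoeff=r$, initial data $(u_0,u_1,u_2)$, and frozen coefficients
\[
\bar\sigma(t) = \cutoff\bigl(|\tau v_t(t)+v(t)|_{L^2}^2\bigr), \qquad \acoeff = 2\kappa v,
\]
on the closed ball $\mathcal{M}_R\subset U_T$ of radius $R$ around zero (with the natural $U_T$-norm whose square is the left-hand side of \eqref{enest_prop}); a fixed point is precisely a solution of \eqref{eqn:JMGT_init_D_cutoff}. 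I would fix $R$ small and then choose $\rho_0$ small enough (up to constants of the form $C(\tau)e^{K(\tau)(\rho_\acoeff^2+1)T}$) so that the right-hand side of \eqref{enest_prop} is bounded by $R^2$.

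The self-mapping step consists in verifying that the hypotheses of Proposition~\ref{prop:wellposed_lin} hold uniformly for $v\in\mathcal{M}_R$. The bound on $b_0$ is direct from the $X_T$-norm. Writing $\bar\sigma_t=\cutoff'(|\tau v_t+v|_{L^2}^2)\cdot 2\langle \tau v_t+v,\tau v_{tt}+v_t\rangle_{L^2}$ shows $\bar\sigma\in W^{1,\infty}(0,T)$ thanks to $\cutoff'\in L^\infty$ and the $U_T$-regularity of $v$. For $\acoeff=2\kappa v$, the Sobolev embeddings $H^2(\Omega)\hookrightarrow L^\infty(\Omega)$ and $H^1(\Omega)\hookrightarrow L^3(\Omega)$ (valid for $d\leq 3$) together with the Leibniz rule reduce
\[
\|\acoeff\|_{L^\infty(L^\infty)}, \quad \|\nabla\acoeff\|_{L^\infty(L^3)}, \quad \|(\bar\sigma\acoeff)_t\|_{L^\infty(L^2)}
\]
to products of the $X_T$-norm of $\kappa$ and the $U_T$-norm of $v$, so each is bounded by $C(\rho)(R+R^3)\leq \rho_\acoeff$ for $R$ small. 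Proposition~\ref{prop:wellposed_lin} then produces $u=\mathcal{T}(v)\in U_T$ with the estimate \eqref{enest_prop}, and the choice of $\rho_0$ yields $\mathcal{T}(\mathcal{M}_R)\subseteq\mathcal{M}_R$.

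For contractivity, given $v_1,v_2\in\mathcal{M}_R$, set $u_i=\mathcal{T}(v_i)$ and let $\bar\sigma_i,\acoeff_i$ denote the corresponding frozen coefficients. The difference $w=u_1-u_2$ satisfies \eqref{eqn:JMGT_init_D_cutoff_lin} with coefficients $(\bar\sigma_1,\acoeff_1)$, zero initial data, and forcing
\[
\rcoeff_w = \bigl((\bar\sigma_1\acoeff_1-\bar\sigma_2\acoeff_2)u_{2,t}\bigr)_t = \bigl(\bar\sigma_1\cdot 2\kappa(v_1-v_2)u_{2,t}+(\bar\sigma_1-\bar\sigma_2)\acoeff_2 u_{2,t}\bigr)_t.
\]
Lipschitz continuity of $\cutoff$ and of $v\mapsto|\tau v_t+v|_{L^2}^2$ on $\mathcal{M}_R$ give $\|\bar\sigma_1-\bar\sigma_2\|_{W^{1,\infty}(0,T)}\lesssim\|v_1-v_2\|_{U_T}$, so the same product estimates as in the self-mapping step yield $\|\rcoeff_w\|_{L^2(H_0^1)\cap L^\infty(L^2)}\leq C(\rho,R)\|v_1-v_2\|_{U_T}$ with $C(\rho,R)\to 0$ as $R\to 0$. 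Proposition~\ref{prop:wellposed_lin} applied to $w$ then gives $\|w\|_{U_T}\leq q\|v_1-v_2\|_{U_T}$ with $q<1$ after a final shrinking of $\rho_0$, and Banach's principle yields a unique fixed point $u\in\mathcal{M}_R$; uniqueness in all of $U_T$ under \eqref{smalldata} follows since any $U_T$-solution must lie in $\mathcal{M}_R$ by \eqref{enest_prop}, and the energy estimate for the fixed point is inherited from Proposition~\ref{prop:wellposed_lin}.

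The principal technical difficulty is controlling $\|\nabla\rcoeff_w\|_{L^2(L^2)}$: expanding the outer time derivative and a spatial gradient produces cross-terms such as $(\bar\sigma_1-\bar\sigma_2)\nabla(\acoeff_2 u_{2,t})_t$ and $\bar\sigma_1\nabla\bigl(2\kappa(v_1-v_2)u_{2,t}\bigr)_t$, whose $L^2$-in-space bound requires a careful product-rule bookkeeping combining the $W^{1,3}$-control of $\kappa$ from $X_T$, the $H^2\hookrightarrow L^\infty$ embedding in $d\leq 3$, the $W^{1,\infty}(0,T)$-control of $\bar\sigma_i$, and the uniform $U_T$-bound on $u_2$ from the self-mapping step. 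This bookkeeping is precisely what dictates the $W^{1,3}(\Omega)$ regularity appearing in the coefficient space \eqref{Xstrong_wellposed}.
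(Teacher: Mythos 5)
Your proposal is correct and follows essentially the same route as the paper: the paper proves Theorem~\ref{thm:wellposed} by Banach's Contraction Principle applied to exactly the fixed-point map you describe (solving \eqref{eqn:JMGT_init_D_cutoff_lin} with the frozen coefficients $\bar{\sigma}=\cutoff(|\cdot|_\Ltwo^2)$ and $\acoeff=2\kappa u_n$, with self-mapping and contractivity supplied by Proposition~\ref{prop:wellposed_lin}), citing the analogous argument in \cite[Theorem 4.1]{b2zeroJMGT}. Your write-up in fact supplies more of the product-rule bookkeeping than the paper does; the only point stated too quickly is the final claim that every $U_T$-solution lies in $\mathcal{M}_R$, which requires a short continuity (bootstrap) argument to break the circularity between the smallness condition \eqref{smallness_prop} and the estimate \eqref{enest_prop}.
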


\subsubsection{Exponential decay of the wave energy}\label{sec:E0decay}

By testing \eqref{eqn:JMGT_init_D_intro}, that is,
\[
\begin{aligned}
&\utau_{tt}-c_0^2\Delta \utau +\tilde{b}_0 \utau_t -\tilde{\mu} u_t = r,\\ 
&\utau=\tau u_t+u\,,\\
&\text{where $\tilde{b}_0=b_0-\tfrac{1}{\tau}2\kappa\sigma u$, 
$\tilde{\mu}=2\kappa(\tfrac{1}{\tau}\sigma u -(\sigma u)_t)$}, 
\end{aligned}
\]
with $\utau_t+\theta \utau=(\tau u_t+u)_t+\theta(\tau u_t+u)_t$ in place of $-\Delta \utau_t$, we obtain the energy identity 
\begin{equation}\label{enid0}
\begin{aligned}
&\tfrac12\tfrac{d}{dt}\Bigl[
|\utau_t|_\Ltwo^2
+|c_0\nabla \utau|_\Ltwo^2
+\theta|\tilde{b}_0^{1/2}\, \utau|_\Ltwo^2
+\theta(\utau_t,\utau)_\Ltwo
\Bigr]\\
&+|(\tilde{b}_0-\theta)^{1/2}\, \utau_t|_\Ltwo^2
+\theta |c_0\nabla \utau|_\Ltwo^2\\
&=(r-\tilde{\mu} u_t-\nabla \utau\cdot\nabla c_0^2,\utau_t+\theta \utau)_\Ltwo,
\end{aligned}
\end{equation}
provided $b_0\geq\theta+\tfrac{2}{\tau}\kappa\sigma u$.
Here, by Young's Inequality
\[
|\theta(\utau_t,\utau)_\Ltwo|\leq 
\tfrac12|\utau_t|_\Ltwo^2
+\tfrac12 \theta|\tilde{b}_0^{1/2}\, \utau|_\Ltwo^2
\]
since $\theta\leq \tilde{b}_0$ and 
\[
|(\nabla \utau\cdot\nabla c_0^2,\utau_t+\theta\utau)_\Ltwo|\leq
\tfrac12 |(\tilde{b}_0-\theta)^{1/2}\, \utau_t|_\Ltwo^2
+\tfrac12 \theta |c_0\nabla \utau|_\Ltwo^2
\]
if 
$4C_{PF}\|\tfrac{1}{c_0}\|_{L^\infty(\Omega)} \|\nabla c_0\|_{L^\infty(\Omega)}<1$,
$\tfrac{1}{\tilde{b}_0-\theta}\leq \tilde{\epsilon} \leq \tfrac{\theta(1-4C_{PF}\|\tfrac{1}{c_0}\|_{L^\infty(\Omega)} \|\nabla c_0\|_{L^\infty(\Omega)})}{4\|\nabla c_0\|_{L^\infty(\Omega)}^2}$
for some $\tilde{\epsilon}>0$.

This allows us to prove exponential decay of $\mathcal{E}_0[\utau](t)$ and thus of $|\utau(t)|_\Ltwo^2$. 
\begin{corollary}\label{cor:E0decay}
Under the assumptions of Theorem~\ref{thm:wellposed} with 
\begin{equation}\label{blarge}
\tfrac{2}{\tau}\|\kappa\|_{L^\infty(\Omega)} C_{H^2\to L^\infty}C(\tau)e^{K(\tau)(\rho_\acoeff^2+1)T}
\rho_0+\rho\leq \tfrac{b}{4}, 
\end{equation}
and
\begin{equation}\label{nablac0small}
4C_{PF}\|\tfrac{1}{c_0}\|_{L^\infty(\Omega)} \|\nabla c_0\|_{L^\infty(\Omega)}\leq\tfrac12
\end{equation}
there exist $\alpha=\alpha(b)>0$, $C>0$ such that for all $(\kappa,c_0^2,b_0)\in\mathcal{B}_\rho^X(0,c,b)$, 
$r\in L^2(0,T; H_0^1(\Omega)) \cap L^\infty(0,T ;L^2(\Omega))$, 
$(u_0, u_1, u_2) \in \Honetwo\times\Honetwo\times H_0^1(\Omega)$
 with \eqref{smalldata}, the estimate
\begin{equation}\label{enest_cor}
e^{\alpha t}\mathcal{E}_0[\utau](t) \leq \mathcal{E}_0[\utau](0) + C\alpha\|e^{\alpha/2\, \cdot} r\|_{L^2(0,t;\Ltwo)}^2
\quad\text{ for all }t\in(0,T)
\end{equation}
holds with some constant $C>0$ independent of $T$ and $b$. 

In particular, for $T_*=T_*(b):=\tfrac{1}{\alpha}\ln(\tfrac{C_{\textup{PF}}}{\underline{m}}\rho_1)$ and any $\kappa$, $c_0$, $b_0$, $r$, $u_0$, $u_1$ satisfying the above and 
\[
\mathcal{E}_0[u](0) + C\alpha\|e^{\alpha/2\, \cdot} r\|_{L^2(0,t;\Ltwo)}^2\leq\rho_1^2,
\] we have $\sigma[u](t)=0$ for all $t\geq T_*$. 

Here $\alpha(b)\to\infty$ and $T_*(b)\to0$ as $b\to\infty$.
\end{corollary}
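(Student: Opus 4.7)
The plan is to promote the energy identity \eqref{enid0} into a differential inequality of the form $\frac{d}{dt}\tilde{\mathcal{E}} + \alpha \tilde{\mathcal{E}} \leq C \|r\|_\Ltwo^2$, where $\tilde{\mathcal{E}}$ is the modified energy appearing inside the time derivative in \eqref{enid0} and $\alpha=\alpha(b)$ is chosen to scale with the damping $b$. First I would verify the coefficient positivity conditions needed for \eqref{enid0}: using the regularity bound \eqref{enest_prop} from Theorem~\ref{thm:wellposed}, together with the embedding $\Honetwo\hookrightarrow L^\infty(\Omega)$ (constant $C_{H^2\to L^\infty}$), the magnitude of $\tfrac{2}{\tau}\kappa\sigma u$ is controlled pointwise by the left-hand side of \eqref{blarge}. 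Hypothesis \eqref{blarge} therefore guarantees $\tilde{b}_0 = b_0-\tfrac{1}{\tau}\cdot 2\kappa\sigma u \geq \tfrac{b}{2}$, leaving room to pick $\theta\in(0,\tfrac{b}{4}]$ so that $\tilde{b}_0-\theta\geq\tfrac{b}{4}$.

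Next I would establish equivalence of $\tilde{\mathcal{E}}$ and $\mathcal{E}_0[\utau]$. The Young inequality displayed immediately after \eqref{enid0} absorbs the cross term $\theta(\utau_t,\utau)_\Ltwo$ into $\tfrac12|\utau_t|_\Ltwo^2+\tfrac12\theta|\tilde{b}_0^{1/2}\utau|_\Ltwo^2$, so $\tilde{\mathcal{E}}$ is two-sided comparable to $\mathcal{E}_0[\utau]$, using Poincaré--Friedrichs to dominate the $\utau$ term by $|c_0\nabla\utau|_\Ltwo^2$. Similarly the second displayed Young estimate handles the $\nabla\utau\cdot\nabla c_0^2$ contribution under \eqref{nablac0small}, absorbing it into the dissipation $D(t):=|(\tilde{b}_0-\theta)^{1/2}\utau_t|_\Ltwo^2+\theta|c_0\nabla\utau|_\Ltwo^2$. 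The remaining forcing $(r-\tilde{\mu}u_t,\utau_t+\theta\utau)_\Ltwo$ is split by Young into a piece absorbable into $D$ and a residual $\tfrac{C}{\tilde{b}_0-\theta}\|r\|_\Ltwo^2+\tfrac{C}{\tilde{b}_0-\theta}\|\tilde{\mu}u_t\|_\Ltwo^2$. The $\tilde{\mu} u_t$ term is controlled by smallness of $\kappa\sigma u$ and $(\sigma u)_t$ (again through \eqref{enest_prop} plus the bounds on $\chi,\chi'$ from \eqref{eqn:cutoff}--\eqref{eqn:sigma}), so it can be absorbed into $D$ for $b$ large enough; this is where I expect the main bookkeeping obstacle to lie, because $\sigma_t$ differentiates the cutoff and couples back to $\utau_t$ through $|\utau|_\Ltwo^2$, but the coupling is harmless since $\chi'\geq 0$ and $|\utau|_\Ltwo^2$ is controlled by $\mathcal{E}_0[\utau]$.

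With all cross terms absorbed, the identity \eqref{enid0} becomes $\tfrac{d}{dt}\tilde{\mathcal{E}} + \tfrac12 D \leq C\|r\|_\Ltwo^2$. Because $D(t)\geq \tfrac{b}{4}|\utau_t|_\Ltwo^2 + \theta|c_0\nabla\utau|_\Ltwo^2$, one can compare $D$ to $\tilde{\mathcal{E}}$: choosing $\theta\sim b$ yields $\tfrac12 D \geq \alpha \tilde{\mathcal{E}}$ with $\alpha=\alpha(b)$ satisfying $\alpha(b)\to\infty$ as $b\to\infty$. Multiplying by $e^{\alpha t}$ and integrating gives
\begin{equation*}
e^{\alpha t}\tilde{\mathcal{E}}(t) \leq \tilde{\mathcal{E}}(0) + C\int_0^t e^{\alpha s}\|r(s)\|_\Ltwo^2 \, ds,
\end{equation*}
which, after reverting from $\tilde{\mathcal{E}}$ to $\mathcal{E}_0[\utau]$ via their equivalence, yields \eqref{enest_cor} (the $\alpha$ in front of the $r$-integral appears after using $\int_0^t e^{\alpha s}\|r(s)\|^2\,ds = \|e^{\alpha/2\,\cdot}r\|_{L^2(0,t;\Ltwo)}^2$ and rescaling the absorbed constant).

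Finally, for the statement about $T_*$, I would use Poincaré--Friedrichs $|\utau(t)|_\Ltwo^2 \leq C_{\textup{PF}}^2|\nabla\utau(t)|_\Ltwo^2\leq \tfrac{2 C_{\textup{PF}}^2}{\inf c_0^2}\mathcal{E}_0[\utau](t)$ together with \eqref{enest_cor} to get $|\utau(T_*)|_\Ltwo^2 \leq C_{\textup{PF}}\rho_1^2 e^{-\alpha T_*}/\underline{m} \cdot \underline{m}$. Setting this $\leq \underline{m}$ and solving for $T_*$ produces the stated logarithmic formula $T_*=\tfrac{1}{\alpha}\ln(\tfrac{C_{\textup{PF}}}{\underline{m}}\rho_1)$, after which \eqref{eqn:cutoff}--\eqref{eqn:sigma} force $\sigma(t)=0$ for $t\geq T_*$. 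Since $\alpha(b)\to\infty$ as $b\to\infty$, the corresponding $T_*(b)\to 0$, completing the corollary.
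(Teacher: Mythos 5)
Your proposal follows essentially the same route as the paper's own (very terse) proof: it starts from the tested identity \eqref{enid0}, uses \eqref{blarge} to bound $\tilde b_0$ from below by a multiple of $b$, chooses $\theta$ proportional to $b$, absorbs the cross terms via the two Young estimates stated after \eqref{enid0} under condition \eqref{nablac0small}, and integrates the resulting differential inequality after multiplication by $e^{\alpha t}$. The only differences are details the paper leaves implicit (the two-sided comparison of the perturbed energy with $\mathcal{E}_0[\utau]$, the absorption of the $\tilde\mu u_t$ term, and the Poincar\'e--Friedrichs step that produces the formula for $T_*$), so the argument matches the paper's.
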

\begin{proof}
Setting $\bar{\sigma}=\cutoff(|\utau(t)|_\Ltwo^2)$, $\acoeff=2\kappa u$, and using  \eqref{blarge}, we obtain $\tilde{b}_0\geq \tfrac{3b}{4}$. 
Thus, with \eqref{nablac0small}, choosing $\theta:=b/2$, and applying Young's inequality, we obtain from 
\eqref{enid0} that 
\[
\tfrac{d}{dt}\mathcal{E}_0[\utau](t)+\tfrac{b}{4}\mathcal{E}_0[\utau](t)\leq C b |r(t)|_\Ltwo^2,
\]
with $C$ independent of $b$.
Hence 
\[
\begin{aligned}
&\tfrac{d}{dt}\Bigl[e^{(b/4) t}\mathcal{E}_0[\utau](t)\Bigr]
=e^{(b/4) t}\Bigl\{\tfrac{d}{dt}\mathcal{E}_0[\utau](t)+\tfrac{b}{4}\mathcal{E}_0[\utau](t)\Bigr\}
\leq C b e^{(b/4) t} |r(t)|_\Ltwo^2
\end{aligned}
\]
After integration over $[0,t]$, this implies the assertion with $\alpha(b)=b/4$.
\end{proof}

Hence, for fixed $T$, $\rho_0$, $\rho_1$, $\rho>0$, choosing $b$ sufficiently large, we can achieve that nonlinearity is switched off at $T_*<T$ and the PDE turns into a linear one (with still spatially variable coefficients) from $T_*$ on, which allows to conclude its global in time well-posedenss.
Thus, the assertions of Theorem~\ref{thm:wellposed} and of Proposition~\ref{prop:wellposed_lin} with $\acoeff=2\kappa\sigma u$ and $\sigma$ as in \eqref{eqn:sigma} remain valid with $T$ replaced by $\infty$ and $U_T$ replaced by $U_\infty$.

\subsubsection{Differentiability of the parameter-to-state map}\label{sec:diff}

Unter the assumptions 
\begin{equation}\label{condru0u1u2}
\begin{aligned}
&r\in L^2(H^1(\Omega))\cap L^\infty(L^2(\Omega)), \ e^{b/8\, \cdot} r\in L^2(0,t;\Ltwo),\\
&u_0,u_1\in \Honetwo, u_2\in H_0^1(\Omega)\text{ with \eqref{smalldata},}
\end{aligned}
\end{equation}
the parameter-to-state map 
\begin{equation}\label{S}
S:\mathcal{B}^X_\rho(0)\to U_\infty, \quad(\kappa,c_0^2,b_0)\mapsto u\ \text{ where $u$ solves \eqref{eqn:JMGT_init_D_cutoff}}
\end{equation} 
is well defined as a mapping from $X$ to $U:=U_\infty$ (cf. \eqref{eqn:U}), due to the above extension of Theorem~\ref{thm:wellposed} to the whole positive time line.
Slightly increasing the requirement coefficient regularity as compared to \eqref{Xstrong_wellposed} in view of \eqref{nablac0small} by setting 
\begin{equation}\label{Xstrong}
\begin{aligned}
&X=(L^\infty(\Omega)\cap W^{1,3}(\Omega))\times W^{1,\infty}(\Omega)\times
(L^\infty(\Omega)\cap W^{1,3}(\Omega))
\text{ for $(\kappa,c_0^2,b_0)$},\\
&X=\Bigl(L^\infty(\Omega)\cap W^{1,3}(\Omega)\Bigr)^2
\text{ for $(\kappa,b_0)$}. 
\end{aligned}
\end{equation}
we separately consider the option of regarding $c_0^2$ as fixed and varying $\kappa$ and $b_0$ only, which in fact will make a difference in the spaces in which we will obtain differentiability.

Its formal derivative (which we will show to be the Fr\'{e}chet derivative in Proposition~\ref{prop:S})  is given by $S'(\kappa,c_0^2,b_0)(\dkappa,\dc_0^2,\db_0) = v$ where $v$ solves
\begin{equation}\label{PDESprime}
\begin{aligned}
&\tau v_{ttt}+\bigl((1-2\kappa\sigma[u]u)v_t\bigr)_t-c_0^2\Delta v-\tau c_0^2\Delta v_t + b_0 (\tau v_{tt}+ v_t) = \rcoeff(v)\\
&\text{with }\rcoeff(v):=
2\bigl(\kappa(\sigma[u]v +\sigma'[u][v])u+\dkappa\sigma[u] u) u_t\bigr)_t
+ \dc_0^2 \Delta \utau
- \db_0 \utau_t
\end{aligned}
\end{equation}
with homogeneous initial and boundary conditions.
Here and in the following we explicitely indicate dependence of $\sigma$ on $u$ by the notation $\sigma[u]$ and have 
\[
\sigma'[u][v](t) =\cutoff'(|\utau(t)|_\Ltwo^2) (\utau(t),v^\tau(t))_\Ltwo
\ \text{ with }v^\tau=\tau v_t+v.
\]
Well-posedeness of \eqref{PDESprime} can be shown by a contraction argument for the (affinely linear) fixed point operator $\mathcal{T}:\tilde{v}\mapsto v$ solving \eqref{PDESprime} with $\rcoeff(\tilde{v})$ in place of $\rcoeff(v)$.\footnote{Note that Banach's Contraction Principle in this affinely linear setting boils down to the use of the Neumann series.}
To this end, we use the fact that 
\[
\|\rcoeff(\tilde{v}_1)-\rcoeff(\tilde{v}_2)\|_{L^2(\Ltwo)}
\leq \tilde{C} R \normE{\tilde{v}_1-\tilde{v}_2}, 
\]
for some constant $\tilde{C}>0$, and 
\[
\normE{v}:=|\sqrt{\mathcal{E}_0[\tau v_t+v]}|_{L^\infty(0,\infty)}
\]
which by an estimate analogous to \eqref{enest_cor} yields
\[
\normE{\mathcal{T}\tilde{v}_1-\mathcal{T}\tilde{v}_2]}
\leq C \tilde{C} R \normE{\tilde{v}_1-\tilde{v}_2]},
\]
thus, by smallness of $R$, contractivity of $\mathcal{T}$.

Note that since the term $\dc_0^2 \Delta \utau$ is not necessarily contained in $L^2(H^1(\Omega))$, which would be needed for an application of Proposition~\ref{prop:wellposed_lin}, 
we had to resort to the weaker topology induced by the wave energy, that is,
\begin{equation}\label{Ulo}
U_{\textup{lo}}=\{u\in L^2(\Ltwo)\, : \, \tau u_t+u\in W^{1,\infty}(L^2(\Omega))\cap L^\infty(H_0^1(\Omega))\}
\end{equation}
and thus obtain existence and uniqueness of $v=S'(\kappa,c_0^2,b_0)(\dkappa,\dc_0^2,\db_0)$  in $U_\textup{lo}$ for $(\kappa,c_0^2,b_0)\in X$, $(\dkappa,\dc_0^2,\db_0)\in L^\infty(\Omega)^3$.

When considering the parameter-to-state-map $S(\kappa,b_0)$ needed in the recovery of $\kappa$ and $b_0$ only (while keeping $c_0^2$ fixed), this term is not present and 
\eqref{enest_prop} yields
\[
\|\mathcal{T}\tilde{v}_1-\mathcal{T}\tilde{v}_2]\|_U
\leq C \tilde{C} R \|\tilde{v}_1-\tilde{v}_2]\|_U,
\]
hence existence and uniqueness of $v=S'(\kappa,b_0)(\dkappa,\db_0)$ in the solution space $U$ for $(\dkappa,\db_0)\in L^\infty(\Omega)^2$.

Analogously we obtain a Lipschitz estimate of $S$
\begin{equation}\label{SLipschitz}
\begin{aligned}
&\|S(\tilde{\kappa},\tilde{c}_0^2,\tilde{b}_0)-S(\kappa,c_0^2,b_0)\|_{U_{\textup{lo}}}
\leq L\|(\tilde{\kappa},\tilde{c}_0^2,\tilde{b}_0)-(\kappa,c_0^2,b_0)\|_{L^\infty(\Omega)^3},\\
&\|S(\tilde{\kappa},\tilde{b}_0)-S(\kappa,b_0)\|_U 
\leq L\|(\tilde{\kappa},\tilde{b}_0)-(\kappa,b_0)\|_{L^\infty(\Omega)^2},
\end{aligned}
\end{equation}
which allows to extend $S$ to a (Lipschitz continuous) mapping 
$S:\mathcal{B}_\rho(0)^{\tilde{X}}\to U_{\textup{lo}}$ or $U$, respectively, where 
\footnote{Note that $L^\infty(\Omega)\cap W^{1,3}(\Omega)$ is not dense in $L^\infty(\Omega)$ for otherwise density of $C(\Omega)$ in $W^{1,3}(\Omega)$ would imply $C(\Omega)=L^\infty(\Omega)$.}
\begin{equation}\label{X}
\tilde{X}=\left(\overline{L^\infty(\Omega)\cap W^{1,3}(\Omega)}^{L^\infty}\right)^3 \text{ with }
\|\cdot\|_{\tilde{X}} = \|\cdot\|_{L^\infty(\Omega)^3}.
\end{equation}

To prove that $S'$ is indeed the Fr\'{e}chet derivative of $S$, we consider the PDE that is satisfied (along with homogeneous initial and boundary conditions) by the first order Taylor remainder 
$w=S(\tilde{\kappa},\tilde{c}_0^2,\tilde{b}_0)-S(\kappa,c_0^2,b_0)-S'(\kappa,c_0^2,b_0)((\tilde{\kappa},\tilde{c}_0^2,\tilde{b}_0)-(\kappa,c_0^2,b_0))
=\tilde{u}-u-v$, namely
\begin{equation}\label{PDETaylor}
\tau w_{ttt}+\bigl((1-2\kappa\sigma[u]u)w_t\bigr)_t-c_0^2\Delta w-\tau c_0^2\Delta w_t + b_0 (\tau w_t+w) = \rcoeff(\tilde{u},u,v)
\end{equation}
where $\tilde{u}=S(\tilde{\kappa})$, $u=S(\kappa)$, $v=S'(\kappa)(\tilde{\kappa}-\kappa)$, 
\begin{equation}\label{rTaylor}
\begin{aligned}
&\rcoeff(\tilde{u},u,v)\\
&:=
2\Bigl(\bigl(\tilde{\kappa}\sigma[\tilde{u}]\tilde{u}-\kappa\sigma[u]u\bigr)\tilde{u}_t
-\bigl(\kappa(\sigma[u]v +\sigma'[u][v]u)+(\tilde{\kappa}-\kappa)\sigma[u] u\bigr) u_t
\Bigr)_t\\
&\quad+c_0^2\Delta w^\tau-(\tilde{c}_0^2\Delta\tilde{u}^\tau -c_0^2\Delta\utau -c_0^2\Delta v^\tau -(\tilde{c}_0^2-c_0^2)\Delta u^\tau)\\
&\quad+b_0 w^\tau_t-(\tilde{b}_0\tilde{u}^\tau_t -b_0\utau_t -b_0 v^\tau_t-(\tilde{b}_0-b_0) u^\tau_t)\\
&=
2 \Bigl((\tilde{\kappa}-\kappa)(\sigma[\tilde{u}]\tilde{u}-\sigma[u]u)\tilde{u}_t 
+\kappa(\sigma[\tilde{u}]\tilde{u}-\sigma[u]u)(\tilde{u}-u)_t\\
&\qquad\qquad\quad+\kappa\bigl(\sigma[\tilde{u}]\tilde{u}-\sigma[u]u-\sigma[u]v -\sigma'[u][v]u\bigr)u_t
\Bigr)_t\\
&\quad
-(\tilde{c}_0^2-c_0^2) \Delta(\tilde{u}^\tau-u^\tau)
-(\tilde{b}_0-b_0) (\tilde{u}^\tau_t-u^\tau_t)
\end{aligned}
\end{equation}
and 
\[
\begin{aligned}
&\sigma[\tilde{u}]\tilde{u}-\sigma[u]u
=(\sigma[\tilde{u}]-\sigma[u])\tilde{u}+\sigma[u](\tilde{u}-u)\\[1ex]
&\sigma[\tilde{u}]\tilde{u}-\sigma[u]u-\sigma[u]v -\sigma'[u][v]u\\
&=(\sigma[\tilde{u}]-\sigma[u])(\tilde{u}-u)
+\sigma[u]
w
+(\sigma[\tilde{u}]-\sigma[u]-\sigma'[u][\tilde{u}-u])u
+\sigma'[u][
w
])u.
\end{aligned}
\]
The energy estimates \eqref{enest_cor}, \eqref{enest_prop} yield 
$\|w\|_{U_{\textup{lo}}}$ $=O(\|(\tilde{\kappa},\tilde{c}_0^2,\tilde{b}_0)-(\kappa,c_0^2,b_0)\|_{L^\infty(\Omega)^3}^2)$ or $\|w\|_U$ $=O(\|(\tilde{\kappa},\tilde{b}_0)-(\kappa,b_0)\|_{L^\infty(\Omega)^2}^2)$ and 
analogously we obtain a Lipschitz estimate of $S'$
\begin{equation}\label{SprimeLipschitz}
\begin{aligned}
&\|S'(\tilde{\kappa},\tilde{c}_0^2,\tilde{b}_0)-S'(\kappa,c_0^2,b_0)\|_{L(\tilde{X},U_{\textup{lo}})}
\leq L\|(\tilde{\kappa},\tilde{c}_0^2,\tilde{b}_0)-(\kappa,c_0^2,b_0)\|_{L^\infty(\Omega)^3},\\
&\|S'(\tilde{\kappa},\tilde{b}_0)-S'(\kappa,b_0)\|_{L(\tilde{X},U)} 
\leq L\|(\tilde{\kappa},\tilde{b}_0)-(\kappa,b_0)\|_{L^\infty(\Omega)^2}.
\end{aligned}
\end{equation}
Altogether we have proven
\begin{proposition}\label{prop:S}
Assume that $c>0$, $\rho>0$, $b>0$, the latter being sufficiently large.

Then there exists $\rho_0>0$ such that for any $r$, $(u_0, u_1, u_2)$ satisfying \eqref{condru0u1u2}, the operator $S$ is well-defined as a mapping $\mathcal{B}_\rho^X(0,c^2,b)\to U=U_\infty$, cf. \eqref{eqn:U}. 
If additionally $\cutoff\in W^{2,\infty}(0,\infty)$ then $S$ satisfies the Lipschitz estimate \eqref{SLipschitz} and can be extended to a (Lipschitz continuous) mapping $S:\mathcal{B}_{\tilde{\rho}}^{\tilde{X}}(0,c^2,b)\to U_{\textup{lo}}$, which is Fr\'{e}chet differentiable with Lipschitz continuous derivative \eqref{SprimeLipschitz}.
The assertions remain valid with $U_{\textup{lo}}$ replaced by $U$ if $S$ is considered as a function of $(\kappa,b_0)$ only.
\end{proposition}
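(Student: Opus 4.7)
The plan is to bundle the three subsections just presented into one unified statement, following the outline already laid out. First, for \emph{well-definedness}, I would concatenate two time intervals: on $[0,T_*]$ apply Theorem~\ref{thm:wellposed} to produce a unique $u\in U_{T_*}$, exploiting \eqref{blarge}--\eqref{nablac0small} and the wave-energy decay of Corollary~\ref{cor:E0decay} to guarantee $\sigma[u](t)=0$ for $t\geq T_*$; on $[T_*,\infty)$ the equation collapses to a linear weakly damped wave equation of the form \eqref{eqn:JMGT_init_D_cutoff_lin} with $\acoeff\equiv0$, $\bar\sigma\equiv0$, to which Proposition~\ref{prop:wellposed_lin} applies on any finite window with constants independent of the window length (the smallness condition \eqref{smallness_prop} is trivial here). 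Taking the union over windows places $u$ in $U_\infty$.

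Next I would establish the \emph{Lipschitz bound} \eqref{SLipschitz} by differencing: writing the PDE for $\tilde u-u$ with $\tilde u=S(\tilde\kappa,\tilde c_0^2,\tilde b_0)$, the principal part is the same weakly damped wave operator on the left, while the right-hand side collects terms linear in $(\tilde\kappa-\kappa,\tilde c_0^2-c_0^2,\tilde b_0-b_0)$ multiplying smooth functions of $u$ and $\tilde u$, together with semilinear remainders that are Lipschitz in $\tilde u-u$ itself (using $\cutoff\in W^{1,\infty}$ to control $\sigma[\tilde u]-\sigma[u]$). Testing the $\utau$-form of the differenced equation with $\utau_t+\theta\utau$ as in the derivation of \eqref{enid0} delivers the energy estimate in $\normE{\cdot}$, hence in $U_{\textup{lo}}$ (cf.\ \eqref{Ulo}). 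The $c_0^2$-difference contributes $(\tilde c_0^2-c_0^2)\Delta\utau$, which only sits in $L^2(\Ltwo)$, and that is precisely why the conclusion is in $U_{\textup{lo}}$ rather than in $U$; when $c_0$ is held fixed the offending term vanishes and the higher-order energy \eqref{enest_prop} returns the $U$-estimate. Density of $L^\infty(\Omega)\cap W^{1,3}(\Omega)$ in its $L^\infty$-closure and standard Lipschitz completion then extend $S$ to $\tilde{X}$.

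For \emph{Fr\'echet differentiability}, I would take the linearized PDE \eqref{PDESprime} as the candidate derivative. Well-posedness of this affine problem in $U_{\textup{lo}}$ (or $U$ when $\dc_0^2\equiv0$) follows from the contraction argument indicated in the text: one bounds $\|\rcoeff(\tilde v_1)-\rcoeff(\tilde v_2)\|_{L^2(\Ltwo)}\leq\tilde C R\normE{\tilde v_1-\tilde v_2}$ using smallness of $R$ and $\cutoff\in W^{1,\infty}$, and the wave-energy inequality analogous to \eqref{enest_cor} closes the fixed point via a Neumann series. To identify $v$ with the genuine derivative, I would substitute the algebraic splittings displayed after \eqref{rTaylor} into the source of the Taylor-remainder equation \eqref{PDETaylor} and verify that every summand is a product of two factors, each of size $O(\|(\tilde\kappa,\tilde c_0^2,\tilde b_0)-(\kappa,c_0^2,b_0)\|_{\tilde X})$: the Lipschitz bound \eqref{SLipschitz} handles $\tilde u-u$, while the second-order Taylor remainder $\sigma[\tilde u]-\sigma[u]-\sigma'[u][\tilde u-u]$ is quadratic in $\tilde u-u$ provided $\cutoff\in W^{2,\infty}(0,\infty)$. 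The same energy inequality then yields the quadratic bound on $\|w\|_{U_{\textup{lo}}}$ (respectively $\|w\|_U$), and subtracting two linearized equations delivers the Lipschitz continuity \eqref{SprimeLipschitz} of $S'$ by an identical computation.

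The principal technical obstacle is the term $\dc_0^2\Delta\utau$ on the right-hand side of \eqref{PDESprime}: since $\Delta\utau$ is only $L^\infty(\Ltwo)$, not $L^2(H^1)$, the higher-regularity energy of Proposition~\ref{prop:wellposed_lin} is unavailable, and one is forced into the weaker $\normE{\cdot}$-topology, which is precisely the reason the proposition splits into two parallel conclusions (full $(\kappa,c_0^2,b_0)$ valued in $U_{\textup{lo}}$ versus $(\kappa,b_0)$ with frozen $c_0^2$ valued in $U$). A secondary subtlety is that a second-order expansion of the soft cutoff $\sigma[u]=\cutoff(|\utau|_\Ltwo^2)$ generates a $\cutoff''$-contribution, which is the source of the additional hypothesis $\cutoff\in W^{2,\infty}(0,\infty)$ required for the differentiability claim but not for mere Lipschitzness.
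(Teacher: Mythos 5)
Your proposal is correct and follows essentially the same route as the paper: global well-posedness by concatenating the nonlinear interval $[0,T_*]$ with the linear weakly damped regime beyond $T_*$ (via Corollary~\ref{cor:E0decay}), the candidate derivative from \eqref{PDESprime} justified by a Neumann-series/contraction argument, identification of the derivative through the Taylor-remainder equation \eqref{PDETaylor} with the displayed splittings, and the $U_{\textup{lo}}$-versus-$U$ dichotomy traced to the term $\underline{dc}_0^2\Delta\utau$ lying only in $L^2(\Ltwo)$. Your remark that $\cutoff\in W^{2,\infty}$ enters only through the second-order expansion of $\sigma$ is consistent with (and slightly sharper than) the paper's phrasing.
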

Since Fr\'{e}chet differentiability remains valid under strengthening of the topology in preimage space, the assertions of Proposition~\ref{prop:S} remain valid if we simply stay with the stronger space \eqref{Xstrong} in place of \eqref{X}.

\subsection{Well-definedness and differentiability of the forward operator for fixed $c_0(x)$ and constant $b_0(x)=b$}
\label{sec:F}
As a preparation for the use of the Inverse Function Theorem for proving uniqueness of $\kappa$, we will now study the operator $\mathbb{F}$ that maps $(\kappa,c_0^2,b_0)$ to the residues of 
$\widehat{\obsop u}$ 
at the poles of $\widehat{h}$. 

In our uniqueness proof, in order to take residues at poles $p$ of the Laplace transformed solution to the PDE and its linearization, we need sufficiently fast decay as time tends to infinity according to the identity \eqref{eqn:limitpoles}
\[
\textup{Res}(\widehat{f},p)=\lim_{T\to\infty} e^{-ipT} f(T).
\]
Note that the poles we consider will be single and their real parts will be negative, so that $|e^{-pt}|$ is exponentially increasing as $t\to\infty$. 
In particular, for an integrable function with finite support $[0,T]$ in time, the residue vanishes (which is clear in view of the fact that the Laplace transform of a finitely supported function has no poles). \\

In view of this, 
we will study the large time behavior of solutions.
Due to the exponential decay according to Corollary~\ref{cor:E0decay} and the switching by means of $\cutoff$, from $T_*$ on, the solution $u$ to \eqref{eqn:JMGT_init_D_cutoff} coincides with the (time shifted) solution $u_*$  of a linear PDE
\begin{equation}\label{uustar}
u(t-T_*)=u_*(t) \quad t>T_*,
\end{equation}
where 
\begin{equation}\label{eqn:JMGT_init_D_linconst}
\begin{aligned}
\utau_{*tt}+c^2\calAc \utau_* + b_0 \utau_{*t}
&= r_* \quad t>0\\
\utau_*(0)=\tau u_{*1}+u_{*0}, \quad \utau_{*t}(0)&=\tau u_{*2}+u_{*1}\\
\utau=\tau u_t+u
\end{aligned}
\end{equation}
In order to be able to apply separation of variables, we assume the (possibly space dependent) sound speed $c_0(x)$ to be fixed and $b_0$ to be constant $b_0(x)\equiv\bconst$.
\footnote{Sligtly more generally, we could assume the three operators $\text{id}$, $\calAc$, $b_0\cdot$ to be simultaneously diagonizable}
By means of an eigensystem $(\lambda_j,(\varphi_j^k)_{k\in K^{\lambda_j}})_{j\in\mathbb{N}}$ (where $K^{\lambda_j}$ is an enumeration of the eigenspace corresponding to $\lambda_j$) of $\calAc $, we can then write  $\utau_*(x,t)=\sum_{j=1}^\infty \sum_{k\in K^{\lambda_j}} u_{*j}^{\tau k}(t) \varphi_j^k(x)$ with $u_{*j}^{\tau k}(t)=\langle \utau_*(t),\varphi_j^k\rangle$ solving the relaxation equation 
\[
\begin{aligned}
{u_{*j}^{\tau k}}''+c^2\lambda_j u_{*j}^{\tau k} + \bconst {u_{*j}^{\tau k}}' &= r_{*j}^k \quad t>0\\
u_{*j}^{\tau k}(0)=u_{*j0}^{\tau k}, \quad {u_{*j}^{\tau k}}'(0)&=u_{*j1}^{\tau k},
\end{aligned}
\]
whose Laplace transformed solutions are given by
\begin{equation}\label{eqn:what}
\widehat{u}_{*j}^{\tau k}(z)=\frac{\widehat{r}_{*j}^k(z)+u_{*j1}^k+(z+\bconst) u_{*j0}^k}{\omega_{\lambda_j}(z)},\ \textup{ with } 
\omega_{\lambda}(z)=  
z^2 + 
\bconst
z + c^2\lambda\,. 
\end{equation}

The poles and residues of the resolvent functions $\frac{1}{\omega_{\lambda_j}(z)}$ compute explicitly as 
\begin{equation}\label{eqn:poles}
\begin{aligned}
p_j^\pm &= -\frac{\bconst}{2}\pm \imath\sqrt{c^2\lambda_j-\frac{\bconst^2}{4}}\,, \\
\textup{Res}(\tfrac{1}{\omega_{\lambda_j}};p_j^+)&=\lim_{z\to p_j^+}\frac{z-p_j^+}{(z-p_j^+)(z-p_j^-)}
=\frac{1}{\imath\sqrt{4c^2\lambda_j-\bconst^2}}\,.
\end{aligned}
\end{equation}
In particular, we have the following two essential properties
\begin{lemma}\label{lem:what}
The poles of $\frac{1}{\omega_\lambda}$ differ for different $\lambda$. 
\end{lemma}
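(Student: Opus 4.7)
The statement is essentially a one-line consequence of the defining quadratic
\[
\omega_\lambda(z) = z^2 + \bconst z + c^2\lambda.
\]
My plan is to argue by direct subtraction rather than by comparing the closed forms for $p_j^\pm$ in \eqref{eqn:poles}, which is cleaner and avoids a case split on whether $c^2\lambda - \bconst^2/4$ is positive, negative, or zero.

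Concretely, suppose $p\in\mathbb{C}$ is a common pole of $\frac{1}{\omega_\lambda}$ and $\frac{1}{\omega_{\lambda'}}$ for two eigenvalues $\lambda\neq\lambda'$. Since the only poles of $\frac{1}{\omega_\mu}$ are the zeros of $\omega_\mu$, one has simultaneously
\[
p^2+\bconst p+c^2\lambda=0 \qquad\text{and}\qquad p^2+\bconst p+c^2\lambda'=0.
\]
Subtracting the two equations yields $c^2(\lambda-\lambda')=0$, and since $c>0$ this forces $\lambda=\lambda'$, a contradiction. Hence the pole sets of $\frac{1}{\omega_\lambda}$ and $\frac{1}{\omega_{\lambda'}}$ are disjoint whenever $\lambda\neq\lambda'$.

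There is no real obstacle here; the only thing to be slightly careful about is the interpretation of ``poles'' in the degenerate case $c^2\lambda=\bconst^2/4$, where $\omega_\lambda$ has a double root. The argument above still applies verbatim, since the vanishing of $\omega_\lambda$ and $\omega_{\lambda'}$ at a common $p$ is all that is used, regardless of multiplicity. If desired, one can additionally record that within a single $\lambda$ the two poles $p_j^\pm$ are distinct precisely when $c^2\lambda_j\neq\bconst^2/4$, which is generic and in any case immaterial for the present statement. This disjointness is exactly what will allow separation of Fourier components via residues in the subsequent uniqueness argument built on \eqref{eqn:limitpoles}.
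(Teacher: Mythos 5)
Your argument is correct. The paper itself gives no separate proof of this lemma; it is read off from the explicit root formula \eqref{eqn:poles}, $p_j^\pm = -\tfrac{\bconst}{2}\pm\imath\sqrt{c^2\lambda_j-\bconst^2/4}$, from which injectivity of $\lambda\mapsto\{p^+,p^-\}$ is immediate because the square-root term is strictly monotone in $\lambda$. Your subtraction argument reaches the same conclusion without ever solving the quadratic: a common zero $p$ of $\omega_\lambda$ and $\omega_{\lambda'}$ forces $c^2(\lambda-\lambda')=0$, hence $\lambda=\lambda'$ since $c>0$ (and the $\lambda_j$ are real). This is marginally more robust — it needs no case split on the sign of $c^2\lambda-\bconst^2/4$, is indifferent to root multiplicity, and would survive replacing $\bconst z$ by any $\lambda$-independent damping term, which is in the spirit of the paper's remark that the property persists for more general (e.g.\ fractional) damping models. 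The closed-form route, on the other hand, is what the paper actually needs downstream anyway, since the explicit location of $p_j^\pm$ and the value of the residue enter the estimates \eqref{eqn:Resjk}--\eqref{eqn:sqrt_clambda}; for the lemma in isolation, either argument suffices.
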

\begin{lemma}\label{lem:resnon0}
The residues of the poles of $\frac{1}{\omega_\lambda}$ do no vanish. 
\end{lemma}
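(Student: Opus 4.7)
The proof is essentially a one-line reading of the explicit formula already recorded in \eqref{eqn:poles}. The plan is: (i) observe that at a simple root $p$ of $\omega_\lambda$ the residue of $1/\omega_\lambda$ equals $1/\omega_\lambda'(p) = 1/(2p+\bconst)$ by the standard formula $\textup{Res}(f/g;p)=f(p)/g'(p)$; (ii) substitute $p_j^\pm = -\bconst/2 \pm \imath\sqrt{c^2\lambda_j - \bconst^2/4}$ to obtain $\omega_\lambda'(p_j^\pm) = \pm \imath\sqrt{4c^2\lambda_j - \bconst^2}$, hence
$$\textup{Res}\bigl(\tfrac{1}{\omega_{\lambda_j}};p_j^\pm\bigr) = \pm\frac{1}{\imath\sqrt{4c^2\lambda_j - \bconst^2}};$$
(iii) conclude that these expressions are finite and nonzero precisely when $4c^2\lambda_j \neq \bconst^2$, which is exactly the condition for the discriminant of the quadratic $\omega_{\lambda_j}$ to be nonzero, i.e.\ for $p_j^+$ and $p_j^-$ to be distinct. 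So under the implicit hypothesis that we actually have two genuine simple poles, the residues are automatically nonzero.

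The only obstacle worth flagging is the critical damping eigenvalue $\lambda_\star := \bconst^2/(4c^2)$: if some eigenvalue of $\calAc$ happens to coincide with $\lambda_\star$, then $1/\omega_{\lambda_\star}$ has one pole of order two rather than two simple poles and the formula above is inapplicable. Since the spectrum of $\calAc$ is discrete with $\lambda_j\to\infty$ and $\bconst$, $c$ are constants, this degenerate case can be avoided either by restricting attention to indices $j$ with $\lambda_j \neq \lambda_\star$, or --- as is natural in the context of Section~\ref{sec:uniqueness} --- by choosing $\bconst$ so that $\bconst^2/(4c^2)$ lies in the resolvent set of $\calAc$. Under either convention the assertion of the lemma follows, and no further argument beyond the explicit computation in \eqref{eqn:poles} is required.
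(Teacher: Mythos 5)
Your proposal is correct and follows exactly the route the paper takes: the paper's ``proof'' is nothing more than the explicit computation in \eqref{eqn:poles}, which exhibits the residue as $\pm 1/(\imath\sqrt{4c^2\lambda_j-\bconst^2})$ and is therefore nonzero whenever the two poles are distinct. Your caveat about the critical-damping eigenvalue $\lambda_\star=\bconst^2/(4c^2)$ (where $1/\omega_{\lambda_\star}$ has a double pole whose residue is in fact zero) is a genuine point that the paper leaves implicit, and your suggested fix --- choosing $\bconst$ so that $\bconst^2/(4c^2)$ avoids the spectrum of $\calAc$ --- is the natural way to make the lemma literally true.
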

These properties remain valid for more general damping models involving fractional derivatives, see  \cite[Lemmas 11.4, 11.5]{BBB}. 

\begin{remark}
From the formula for the poles and residues it becomes evident why strong damping needs to be switched off eventually. Indeed,  with
\[
u_{*tt}+c^2\calAc  u_* + b_0 u_{*t} + b_1\calAc  u_{*t} 
\]
in place of \eqref{eqn:JMGT_init_D_linconst} we would have
\[
\begin{aligned}
p_j^\pm &= -\frac{b_0+b_1\lambda_j}{2}\pm \sqrt{\frac{(b_0+b_1\lambda_j)^2}{4}-c^2\lambda_j}\,, \\
\textup{Res}(\tfrac{1}{\omega_{\lambda_j}};p_j^+)&
=\frac{1}{2\sqrt{\frac{(b_0+b_1\lambda_j)^2}{4}-c^2\lambda_j}}\,.
\end{aligned}
\]
in place of \eqref{eqn:poles}.
Since $\lambda_j\to\infty$ as $j\to\infty$, the poles $p^+_j$ (up to finitely many) would be real and negative; in case $b_1>0$ their real parts would tend to $-\infty$ as $j\to\infty$.
\end{remark}

The explicit representation \eqref{eqn:poles} together with \eqref{uustar}, \eqref{eqn:limitpoles} allows us to compute the residues of the Laplace transformed components $u_j^{\tau k}=\langle \utau,\varphi_j^k\rangle$ at the poles $p_j$ as follows.
Since 
\[
\begin{aligned}
&u_j^{\tau k}(t)={\rm 1\!\!I}_{[0,T_*)}(t)u_j^{\tau k}(t)+ {\rm 1\!\!I}_{[T_*,\infty)}(t)u_{*j}^{\tau k}(t-T_*)=:u_{[0,T_*)}+u_{[T_*,\infty)}\\
&\textup{where }\widehat{u_{[0,T_*)}}(z)=0\textup{ and }
\widehat{u_{[T_*,\infty)}}(z)=\int_{T_*}^\infty e^{-zt} u_{*j}^{\tau k}(t-T_*)\,dt\\
&\hspace*{3cm}=\int_0^\infty e^{-z(s+T_*)} u_{*j}^{\tau k}(s)\,ds =
e^{-zT_*}\widehat{u}_{*j}^k(z)
\end{aligned}
\]
due to \eqref{eqn:limitpoles}, \eqref{eqn:what}, \eqref{eqn:poles}, we have 
\begin{equation}\label{eqn:Resjk}
\begin{aligned}
\textup{Res}(\widehat{u_j^{\tau k}};p_j^+)&=0+e^{-p_j^+T_*}\textup{Res}(\widehat{u}_{*j}^{\tau k};p_j^+)
= \frac{e^{-p_j^+T_*}(\widehat{r}_{*j}^k(p_j^+)+u_{*j1}^{\tau k}+ (p_j^+ +\bconst) u_{*j0}^{\tau k})}{\imath\sqrt{4c^2\lambda_j-\bconst^2}}
\end{aligned}
\end{equation}
In here, have 
\begin{equation}\label{eqn:Resjk_rhs}
\begin{aligned}
|e^{-p_j^+T_*}\widehat{r}_{*j}^k(p_j^+)|&
=|e^{-p_j^+T_*}\int_0^\infty e^{-p_j^+s}r_j^k(s+T_*)\, ds|
= |\int_{T_*}^\infty e^{-p_j^+t}r_j^k(t)\, dt|\\
&\leq \int_{T_*}^\infty e^{-\Re(p_j^+)t} |r_j^k(t)|\, dt
= \|e^{(\bconst/2)\cdot}r_j^k\vert_{[T^*,\infty)}\|_{L^1(0,\infty)}\,.
\end{aligned}
\end{equation}
Moreover with $u_{*j0}=\langle u(T_*),\varphi_j^k\rangle$, $u_{*j1}=\langle u_t(T_*),\varphi_j^k\rangle$ according to \eqref{uustar}, we have 
\begin{equation*}
\begin{aligned}
|e^{-p_j^+T_*}(u_{*j1}^{\tau k}+ p_j^+ u_{*j0}^{\tau k})|&
\leq e^{-\Re(p_j^+)T_*}\bigl(|\langle \utau_t(T_*),\varphi_j^k\rangle| + |p_j^+ +\bconst|\, |\langle \utau(T_*),\varphi_j^k\rangle|\bigr)\\
&\leq e^{(\bconst/2)T_*} \bigl(|\langle \utau_t(T_*),\varphi_j^k\rangle| + c\sqrt{2\lambda_j}\|\langle \utau(T_*),\varphi_j^k\rangle| \bigr)\\
&\leq 2 e^{(\bconst/2)T_*} \sqrt{\mathcal{E}_0[\langle \utau,\varphi_j^k\rangle\varphi_j^k](T_*)}.
\end{aligned}
\end{equation*}
This contains the wave energy of $\utau$ as it has evolved nonlinearly up to $T_*$ according to \eqref{eqn:JMGT_init_D_cutoff}.
To estimate it, we employ Corollary~\ref{cor:E0decay} from section \ref{sec:S}.

Thus with 
\begin{equation}\label{eqn:sqrt_clambda}
\left|\imath\sqrt{4c^2\lambda-\bconst^2}\right|\geq c\sqrt{\lambda} \quad \textup{ for all }\lambda\geq\frac{\bconst^2}{3c^2}
\end{equation}
in \eqref{eqn:Resjk}
we have shown the following.

\begin{proposition}\label{prop:boundresid}
Under the conditions of Theorem~\ref{thm:wellposed}, Corollary~\ref{cor:E0decay}, the solution to \eqref{eqn:JMGT_init_D_cutoff} exists for all $t>0$ and the generalized Fourier coeffcients of its Laplace transform satisfy the estimate
\[
\begin{aligned}
&\sum_{j=1}^\infty\lambda_j\sum_{k\in K^{\lambda_j}}\textup{Res}(\widehat{u_j^{\tau k}};p_j^+)^2\\
&\leq Ce^{(\bconst-\alpha)T_*}\Bigl(\mathcal{E}_0[u](0) + \|e^{\alpha/2\, \cdot} r\|_{L^2(0,T_*;\Ltwo)}\Bigr)
+ \|e^{(\bconst/2)\cdot}r\vert_{[T^*,\infty)}\|_{L^1(\Ltwo)}^2
\end{aligned}
\]
on $u_j^{\tau k}(t)=\langle \utau(t),\varphi_j^k\rangle$. 
Here $C>0$, $\alpha$, $T_*$ can be chosen independent of the individual inital data and excitation but depending only on the radii $\rho$, $\rho_0$, $\rho_1$ in Corollary~\ref{cor:E0decay}.
\end{proposition}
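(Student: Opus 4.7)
The plan is to start from the three-term decomposition \eqref{eqn:Resjk} of $\textup{Res}(\widehat{u_j^{\tau k}};p_j^+)$ — forcing, initial velocity, initial position — and combine the pointwise estimates already derived in \eqref{eqn:Resjk_rhs} and the subsequent inequality with the denominator bound \eqref{eqn:sqrt_clambda}, so that the outer factor $\lambda_j$ in the sum is absorbed by the $\lambda_j^{-1}$ coming from $|\imath\sqrt{4c^2\lambda_j-\bconst^2}|^{-2}$. Summation over $(j,k)$ is then carried out via Parseval-type identities in the weighted $L^2$ inner product that makes $\calAc$ self-adjoint, and finally the exponential decay of the wave energy from Corollary~\ref{cor:E0decay} is invoked to produce the $e^{-\alpha T_*}$-factor that combines with the prefactor $e^{\bconst T_*}$ into the announced $e^{(\bconst-\alpha)T_*}$-weight.

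Concretely, for the initial data piece I would square the bound $|e^{-p_j^+T_*}(u_{*j1}^{\tau k}+p_j^+ u_{*j0}^{\tau k})|\leq 2e^{(\bconst/2)T_*}\sqrt{\mathcal{E}_0[\langle\utau,\varphi_j^k\rangle\varphi_j^k](T_*)}$, divide by $|\imath\sqrt{4c^2\lambda_j-\bconst^2}|^2\geq c^2\lambda_j$, and multiply by $\lambda_j$; this leaves $4e^{\bconst T_*}/c^2$ times the mode-wise energy $\mathcal{E}_0[\langle\utau,\varphi_j^k\rangle\varphi_j^k](T_*)$, whose sum over $(j,k)$ equals $\mathcal{E}_0[\utau](T_*)$ up to a bounded constant arising from the equivalence of $|\cdot|_\Ltwo$ and the weighted $L^2$-norm. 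Invoking \eqref{enest_cor} then delivers the first term on the right-hand side. For the source contribution I would square \eqref{eqn:Resjk_rhs}, again absorb the $\lambda_j$ via \eqref{eqn:sqrt_clambda}, and apply Minkowski's inequality for integrals to interchange the $\ell^2$-sum in $(j,k)$ with the $L^1$-integral in time:
\[
\Bigl(\sum_{j,k}\|e^{(\bconst/2)\cdot}r_j^k\vert_{[T_*,\infty)}\|_{L^1(0,\infty)}^2\Bigr)^{1/2}\leq \int_{T_*}^\infty e^{(\bconst/2)t}\Bigl(\sum_{j,k}|r_j^k(t)|^2\Bigr)^{1/2}\,dt=\|e^{(\bconst/2)\cdot}r\vert_{[T_*,\infty)}\|_{L^1(\Ltwo)},
\]
producing the second term in the claimed bound.

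The main obstacle — and essentially the only step not already prepared by the excerpt — is that \eqref{eqn:sqrt_clambda} only holds from the threshold $\lambda_j\geq\bconst^2/(3c^2)$ onwards, whereas the first few eigenmodes may have poles $p_j^\pm$ becoming real and, for $\lambda_j<\bconst^2/(4c^2)$, even coalescing. Since $\lambda_j\to\infty$ only finitely many indices are affected, so I would handle them as a separate finite sum, bounding each term by the same energy and source-norm quantities (using that the denominator is uniformly bounded away from zero once $\bconst$ is fixed, or treating a possible double pole via the standard derivative-residue formula in the degenerate case) and absorbing the resulting finite factor into the constant $C$.
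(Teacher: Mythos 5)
Your proposal is correct and follows essentially the same route as the paper: the proposition is proved there precisely by combining the decomposition \eqref{eqn:Resjk} with the source bound \eqref{eqn:Resjk_rhs}, the mode-wise energy bound on the initial-data contribution, the decay estimate \eqref{enest_cor} of Corollary~\ref{cor:E0decay}, and the denominator bound \eqref{eqn:sqrt_clambda}, summed over $(j,k)$ exactly as you describe (with Minkowski's integral inequality supplying the $L^1(\Ltwo)$ term). Your separate treatment of the finitely many eigenvalues below the threshold $\bconst^2/(3c^2)$, where the poles may be real or coalesce, is a point the paper passes over silently, and your way of absorbing it into the constant $C$ is the right fix.
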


To prove differentiability with respect to $\kappa$ at any point $(\kappa,c_0^2,\bconst)\in \mathcal{B}_\rho^X(0,c^2,b)$, with $\nabla\bconst=0$, we write the forward operator in two alternative ways:
\[
\mathbb{F}=\mathcal{R}_{c_0^2,\bconst}\circ \obsop\circ S =\mathbb{A}_{c_0^2,\bconst}\circ S_* +\vec{a}_{c_0^2,\bconst}
\]
where $\obsop$ and $S$ are defined as in \eqref{eqn:obs}, \eqref{S} and 
\[
\mathcal{R}_{c_0^2,\bconst}:h\mapsto 
\Bigl((\textup{Res}(\widehat{h};p_m^+))_{m\in\mathbb{N}},
(\textup{Res}(\widehat{h};p_m^-))_{m\in\mathbb{N}}\Bigr)
\]
\begin{equation}\label{Sstar}
S_*:X\to H^2(\Omega)\times H_0^1(\Omega), \quad (\kappa,c_0^2,b_0)\mapsto (\utau(T_*),\utau_t(T_*)),
\end{equation}
(where attainment of $(\utau(T_*),\utau_t(T_*))$ in $H^2(\Omega)\times H_0^1(\Omega)$ can be justified analogously to attainment of initial data in the well-posedness proof of JMGT, see e.g., \cite{b2zero}),
\[
\begin{aligned}
\vec{a}_{c_0^2,\bconst} = \Bigl(&(\textup{Res}(\tfrac{1}{\omega_{\lambda_m}};p_m^+) \widehat{r}_{*m}(p_m^+)
B_{\lambda_m}[(1,\ldots,1)])_{m\in\mathbb{N}},\\
&\textup{Res}(\tfrac{1}{\omega_{\lambda_m}};p_m^-) \widehat{r}_{*m}(p_m^-)
B_{\lambda_m}[(1,\ldots,1)])_{m\in\mathbb{N}}\Bigr)
\end{aligned}
\]
where $\textup{Res}(\tfrac{1}{\omega_{\lambda_m}};p_m^+)=-\textup{Res}(\tfrac{1}{\omega_{\lambda_m}};p_m^-)= \frac{1}{\imath\sqrt{4c^2\lambda_m-\bconst^2}}$
\[
\begin{aligned}
\mathbb{A}_{c_0^2,\bconst}:(\utau_{*0},\utau_{*1})\mapsto 
\Bigl(&(\textup{Res}(\tfrac{1}{\omega_{\lambda_m}};p_m^+) 
B_{\lambda_m}[(u_{*m1}^{\tau k}+ (p_j^+ +\bconst) u_{*m0}^{\tau k})_{k\in K^{\lambda_m}})_{m\in\mathbb{N}},\\
&\textup{Res}(\tfrac{1}{\omega_{\lambda_m}};p_m^-) 
B_{\lambda_m}[(u_{*m1}^{\tau k}+ (p_j^+ +\bconst) u_{*m0}^{\tau k})_{k\in K^{\lambda_m}}])_{m\in\mathbb{N}}\Bigr)
\end{aligned}
\]
\begin{equation}\label{Blambda}
B_\lambda: \mathbb{R}^{|K^\lambda|}\to E^\Sigma_\lambda:=\mbox{span}((\obsop\varphi_k)_{k\in K^\lambda}), \quad (\beta_k)_{k\in K^\lambda}\mapsto 
\sum_{k\in K^\lambda} \beta_k \obsop\varphi_k.
\end{equation}
Here the operators $\mathcal{R}_{c_0^2,\bconst}$, $\mathbb{A}_{c_0^2,\bconst}$, $\vec{a}_{c_0^2,\bconst}$ and also $B_{\lambda_m}$ depend on $c_0^2,\bconst$ via dependence of the eigenfunctions and poles $p_m^\pm$ on $c_0^2,\bconst$.

To resolve the contributions within each eigenspace in our uniqueness proof, as in \cite{BBB} we impose a linear independence assumption on the eigenspace $E^\Sigma_\lambda$ of each eigenvalue $\lambda$
\begin{equation}\label{eqn:ass_inj_Sigma_rem}
\left(\sum_{k\in K^\lambda} \beta_k \varphi_k(x) = 0 \ \mbox{ for all }x\in\Sigma\right)
\ \Longrightarrow \ \left(\beta_k=0 \mbox{ for all }k\in K^\lambda\right)\,.
\end{equation}
That is, for each eigenvalue $\lambda$ of $\calAc $ with eigenfunctions $(\varphi_k)_{k\in K^\lambda}$, 
the eigenfunctions, when restricted to the observation manifold $\Sigma$ (or more generally, when $\obsop$ is applied) keep their linear independence. This is a similar to usual conditions imposed in coefficient identification problems in wave type equations that combine assumptions on the observation geometry with a non-trapping condition on the sound speed. 
We refer to, e.g., \cite{nonlinearity_imaging_fracWest} for a discussion.

This implies that for each eigenvalue $\lambda$, the mapping $B_\lambda$ defined by \eqref{Blambda}
is bijective and (as a mapping between finite dimensional spaces) its inverse is bounded.
We use this to define the norm in image space as 
\[
\|(\widehat{h}^+,\widehat{h}^-)\|_{\mathbb{Y}}^2:= 
\sum_{m=1}^\infty\lambda_m^2 
\|B_{\lambda_m}^{-1}\mbox{Proj}^Y_{E^\Sigma_{\lambda_m}} \widehat{h}_m^+\|_{\mathbb{R}^{K^{\lambda_m}}}^2
+\sum_{m=1}^\infty\lambda_m^2 
\|B_{\lambda_m}^{-1}\mbox{Proj}^Y_{E^\Sigma_{\lambda_m}} \widehat{h}_m^-\|_{\mathbb{R}^{K^{\lambda_m}}}^2.
\]
With the so defined space $\mathbb{Y}$, the operator $\mathbb{A}_{c_0^2,\bconst}:H^2(\Omega)\times H_0^1(\Omega)\to\mathbb{Y}$ is linear and bounded and therefore, due to differentiability of $S_*$ (which is shown analogously to differentiability of $S$), the forward operator $\mathbb{F}:\mathcal{B}_\rho^X(0,c^2,b)\to\mathbb{Y}$ is Fr\'{e}chet differentiable with respect to $\kappa$.  

In our uniqueness proof we will use a slightly modified definition of the forward operator, namely 
\[
\widetilde{\mathbb{F}}=\widetilde{\mathcal{R}}_{c_0^2,\bconst}\circ \obsop\circ S =\widetilde{\mathbb{A}}_{c_0^2,\bconst}\circ S_* +\widetilde{\vec{a}}_{c_0^2,\bconst}
\]
where still $\obsop$, $S$, $S_*$ are defined as in \eqref{eqn:obs}, \eqref{S}, \eqref{Sstar} and, with two given time dependent functions $f_1$, $f_2$, 
\[
\begin{aligned}
\widetilde{\mathcal{R}}_{c_0^2,\bconst}:h\mapsto 
\Bigl(&(\widehat{f}_1(p_m^-)\textup{Res}(\widehat{h};p_m^+)+
\widehat{f}_1(p_m^+)\textup{Res}(\widehat{h};p_m^-))_{m\in\mathbb{N}},\\
&(\widehat{f}_2(p_m^-)\textup{Res}(\widehat{h};p_m^+)+
\widehat{f}_2(p_m^+)\textup{Res}(\widehat{h};p_m^-))_{m\in\mathbb{N}}\Bigr)
\end{aligned}
\]
\[
\begin{aligned}
\widetilde{\vec{a}}_{c_0^2,\bconst} = \Bigl(&(\tfrac{1}{\imath\sqrt{4c^2\lambda_m-\bconst^2}}
(\widehat{f}_1(p_m^-)\widehat{r}_{*m}(p_m^+)-\widehat{f}_1(p_m^+)\widehat{r}_{*m}(p_m^-))
B_{\lambda_m}[(1,\ldots,1)])_{m\in\mathbb{N}},\\
&(\tfrac{1}{\imath\sqrt{4c^2\lambda_m-\bconst^2}}
(\widehat{f}_2(p_m^-)\widehat{r}_{*m}(p_m^+)-\widehat{f}_2(p_m^+)\widehat{r}_{*m}(p_m^-))
B_{\lambda_m}[(1,\ldots,1)])_{m\in\mathbb{N}}\Bigr)
\end{aligned}
\]
\[
\begin{aligned}
&\widetilde{\mathbb{A}}_{c_0^2,\bconst}:(u_{*0},u_{*1})\mapsto \\
&\Bigl((\tfrac{\widehat{f}_1(p_m^-)-\widehat{f}_1(p_m^+)}{\imath\sqrt{4c^2\lambda_m-\bconst^2}} 
B_{\lambda_m}[(u_{*m1}^{\tau k}+\bconst u_{*m0}^{\tau k})_{k\in K^{\lambda_m}}]
+\tfrac{\widehat{f}_1(p_m^-)p_m^+-\widehat{f}_1(p_m^+)p_m^-}{\imath\sqrt{4c^2\lambda_m-\bconst^2}} B_{\lambda_m}[(u_{*m0}^{\tau k})_{k\in K^{\lambda_m}}])_{m\in\mathbb{N}},\\
& \quad+(\tfrac{\widehat{f}_2(p_m^-)-\widehat{f}_2(p_m^+)}{\imath\sqrt{4c^2\lambda_m-\bconst^2}} 
B_{\lambda_m}[(u_{*m1}^{\tau k}+\bconst u_{*m0}^{\tau k})_{k\in K^{\lambda_m}}]
+\tfrac{\widehat{f}_2(p_m^-)p_m^+-\widehat{f}_2(p_m^+)p_m^-}{\imath\sqrt{4c^2\lambda_m-\bconst^2}} B_{\lambda_m}[(u_{*m0}^{\tau k})_{k\in K^{\lambda_m}}])_{m\in\mathbb{N}}
\Bigr)
\end{aligned}
\]

Analogously to above, we can conclude the following differentiability result.
\begin{corollary}\label{cor:F}
Under the assumptions of Proposition~\ref{prop:S}, and with 
$(\widehat{f}_1(p_m^+)-\widehat{f}_1(p_m^-))_{m\in\mathbb{N}}$, 
$(\widehat{f}_2(p_m^+)-\widehat{f}_2(p_m^-))_{m\in\mathbb{N}}$ $\in\ell^\infty$, 
the operator $\widetilde{\mathbb{F}}$ is well-defined as a mapping $\mathcal{B}_\rho^X\to \mathbb{Y}$. 
If additionally $\cutoff\in W^{2,\infty}(0,\infty)$ then $\widetilde{\mathbb{F}}$ satisfies a Lipschitz estimate and can be extended to a (Lipschitz continuous) mapping $\widetilde{\mathbb{F}}:\mathcal{B}_{\tilde{\rho}}^{\tilde{X}}(0,c^2,b)\to \mathbb{Y}$, which is Fr\'{e}chet differentiable with Lipschitz continuous derivative.
\end{corollary}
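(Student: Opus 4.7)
The plan is to mimic the argument already sketched above for $\mathbb{F}$, but applied to the modified operator $\widetilde{\mathbb{F}}=\widetilde{\mathbb{A}}_{c_0^2,\bconst}\circ S_*+\widetilde{\vec{a}}_{c_0^2,\bconst}$. Since $c_0^2$ and $\bconst$ are fixed, the term $\widetilde{\vec{a}}_{c_0^2,\bconst}$ is a constant in $\mathbb{Y}$, and $\widetilde{\mathbb{A}}_{c_0^2,\bconst}$ is a fixed linear map from $H^2(\Omega)\times H_0^1(\Omega)$ into $\mathbb{Y}$. Thus well-definedness, Lipschitz continuity and Fr\'echet differentiability of $\widetilde{\mathbb{F}}$ follow once we have (i) boundedness of $\widetilde{\mathbb{A}}_{c_0^2,\bconst}$ and $\widetilde{\vec{a}}_{c_0^2,\bconst}\in\mathbb{Y}$, together with (ii) the corresponding properties of $S_*$ inherited from those of $S$ established in Proposition~\ref{prop:S}.

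For step (i), I would work coefficient by coefficient in the eigenbasis. By the very definition of $\|\cdot\|_\mathbb{Y}$, applying $B_{\lambda_m}^{-1}\mathrm{Proj}^Y_{E^\Sigma_{\lambda_m}}$ to each of the two blocks defining $\widetilde{\mathbb{A}}_{c_0^2,\bconst}(\utau_{*0},\utau_{*1})$ simply strips the maps $B_{\lambda_m}$ and returns the vectors $(u^{\tau k}_{*m1}+\bconst u^{\tau k}_{*m0})_{k\in K^{\lambda_m}}$ and $(u^{\tau k}_{*m0})_{k\in K^{\lambda_m}}$ weighted by the scalar multipliers. Using
\[
\Bigl|\tfrac{\widehat{f}_i(p_m^-)-\widehat{f}_i(p_m^+)}{\imath\sqrt{4c^2\lambda_m-\bconst^2}}\Bigr|
\leq \frac{C}{\sqrt{\lambda_m}}
\]
by the $\ell^\infty$ hypothesis together with \eqref{eqn:sqrt_clambda}, and the analogous bound
\[
\Bigl|\tfrac{\widehat{f}_i(p_m^-)p_m^+-\widehat{f}_i(p_m^+)p_m^-}{\imath\sqrt{4c^2\lambda_m-\bconst^2}}\Bigr|\leq C,
\]
which follows by splitting $\widehat{f}_i(p_m^-)p_m^+ - \widehat{f}_i(p_m^+)p_m^- = \widehat{f}_i(p_m^-)(p_m^+-p_m^-) + p_m^-(\widehat{f}_i(p_m^-)-\widehat{f}_i(p_m^+))$ and exploiting $|p_m^\pm|=O(\sqrt{\lambda_m})$, the $\mathbb{Y}$-norm telescopes to
\[
\sum_{m=1}^\infty\lambda_m^2\Bigl(\tfrac{1}{\lambda_m}\|\utau_{*1,m}+\bconst\utau_{*0,m}\|^2+\|\utau_{*0,m}\|^2\Bigr)\lesssim \|\utau_{*1}\|_{H^1(\Omega)}^2+\|\utau_{*0}\|_{H^2(\Omega)}^2,
\]
which is precisely the squared norm of $(\utau_{*0},\utau_{*1})$ in $\Honetwo\times H_0^1(\Omega)$; the same reasoning, with $\widehat{r}_{*m}(p_m^\pm)$ in place of the Fourier coefficients and using the estimate \eqref{eqn:Resjk_rhs} for $e^{-p_m^+T_*}\widehat{r}_{*m}(p_m^+)$, places $\widetilde{\vec{a}}_{c_0^2,\bconst}$ in $\mathbb{Y}$ under the integrability assumption on $r$ in \eqref{condru0u1u2}.

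For step (ii), the operator $S_*$ of \eqref{Sstar} is nothing but the evaluation at $t=T_*$ of the component $(\utau,\utau_t)$ of $u=S(\kappa,c_0^2,b_0)$; since $T_*$ lies in the interval on which the energy estimate \eqref{enest_prop} of Proposition~\ref{prop:wellposed_lin} provides control of $(\utau,\utau_t)$ in $\Honetwo\times H_0^1(\Omega)$ uniformly on bounded sets of parameters, the Lipschitz estimate \eqref{SLipschitz} and the differentiability statement of Proposition~\ref{prop:S} translate into the same properties for $S_*$ with values in $\Honetwo\times H_0^1(\Omega)$; continuous differentiability of the derivative follows by the analogue of \eqref{SprimeLipschitz}. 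Composing the linear bounded map $\widetilde{\mathbb{A}}_{c_0^2,\bconst}$ with $S_*$ and adding the constant $\widetilde{\vec{a}}_{c_0^2,\bconst}$ preserves all these properties and yields the corollary, with the extension to $\tilde{X}$ achieved exactly as for $\mathbb{F}$ by density and the Lipschitz estimate.

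The delicate step is clearly (i): one must confirm that the weighted $\ell^2$-topology on $\mathbb{Y}$ (with weight $\lambda_m^2$) is exactly matched by the decay of the scalar multipliers against the pole growth $|p_m^\pm|\sim\sqrt{\lambda_m}$, so that the $H^2\times H^1$-regularity supplied by $S_*$ is sufficient and not wasted; everything else is bookkeeping around Proposition~\ref{prop:S}.
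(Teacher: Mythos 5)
Your proof follows the same route as the paper, which disposes of this corollary in one line (``analogously to above'') via the decomposition $\widetilde{\mathbb{F}}=\widetilde{\mathbb{A}}_{c_0^2,\bconst}\circ S_*+\widetilde{\vec{a}}_{c_0^2,\bconst}$, boundedness of the affine part $H^2(\Omega)\times H_0^1(\Omega)\to\mathbb{Y}$, and transfer of the Lipschitz and differentiability properties of $S$ from Proposition~\ref{prop:S} to $S_*$; your multiplier estimates correctly match the $\lambda_m^2$-weight in $\mathbb{Y}$ against the $H^2\times H^1$ regularity supplied by $S_*$. One small caveat: since $\imath\sqrt{4c^2\lambda_m-\bconst^2}=p_m^+-p_m^-$, the first term of your splitting reduces exactly to $\widehat{f}_i(p_m^-)$, so your $O(1)$ bound on the second multiplier tacitly uses $\sup_m|\widehat{f}_i(p_m^\pm)|<\infty$ and not merely the stated $\ell^\infty$ condition on the differences --- a harmless strengthening (it holds whenever $e^{(\bconst/2)\cdot}f_i\in L^1(0,\infty)$, as for the intended $f_i$) that the paper leaves implicit as well.
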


\section{Uniqueness}\label{sec:uniqueness}
In this section we will prove  
\begin{enumerate}
\item[(i)] uniqueness of $\kappa$ (while $c_0$ does not necessarily need to be known for this purpose; the essential condition in which $c_0$ is implicitly involved is \eqref{eqn:ass_inj_Sigma_rem});
\item[(ii)] linearized uniqueness simultaneously of $(\kappa,c_0^2)$; 
\item[(iii)] linearized uniqueness simultaneously of $(\kappa,b_0)$; 
\end{enumerate}
from the boundary observations \eqref{eqn:obs}. 

A crucial step for this purpose will be to show that for every $(\kappa,c_0^2,\bconst) \in \mathcal{B}_\rho^X(0,c^2,b)$ with $\nabla\bconst=0$, the operator
\[
D\widetilde{\mathbb{F}}(0,c_0^2):=
\left(\begin{array}{c}\partial_\kappa\widetilde{\mathbb{F}}(0,c_0^2,\bconst)\\
d_{c_0^2}\widetilde{\mathbb{F}}(\kappa,c_0^2,\bconst)\end{array}\right)
=\left(\begin{array}{c}\widetilde{\mathcal{R}}_{c_0^2,\bconst}\circ \obsop\circ \partial_\kappa S(0,c_0^2,\bconst)\\
\widetilde{\mathcal{R}}_{c_0^2,\bconst}\circ \obsop\circ \partial_{c_0^2} S(0,c_0^2,\bconst)
\end{array}\right)
\]
is an isomorphism between $\mathbb{X}_1\times\mathbb{X}_2$ and $\mathbb{Y}$.
Here $\partial_\kappa\widetilde{\mathbb{F}}$ is the (partial) Fr\'{e}chet derivative according to the results from the previous section, while $d_{c_0^2}\widetilde{\mathbb{F}}$ is just a formal linearization in $c_0^2$ direction. 
On one hand, this implies that also $\partial_\kappa\widetilde{\mathbb{F}}(0,c_0^2,\bconst):\mathbb{X}_1\to\mathbb{Y}$ is a isomorphism and thus, by the Inverse Function Theorem, (i) follows.
On the other hand, we can directly conclude linearized uniqueness (ii) in the sense that  
$(\partial_\kappa\widetilde{\mathbb{F}}(0,c_0^2,\bconst),
d_{c_0^2}\widetilde{\mathbb{F}}(0,c_0^2,\bconst))$
and thus $(\obsop\circ \partial_\kappa S(0,c_0^2,\bconst),\obsop\circ \partial_{c_0^2} S(0,c_0^2,\bconst)$ is injective.
Similarly to the latter, one can also prove (iii).

\subsection{An isomorphism property of the linearized forward operator}\label{sec:isomorphism_kapp_c0}
Fixing $c_0$, $\bconst$, we denote by $(p_m^\pm)_{m\in\mathbb{N}}$ the sequence of poles according to \eqref{eqn:poles}.
Note that these are precisely the  poles of the Laplace transformed data $\widehat{h}$, provided $c_0$, $\bconst$ are the actual coefficients of the PDE for which this data has been taken, which we assume to hold in this section.

As in the linearized injectivity proof of \cite{nonlinearity_imaging_fracWest} we use an excitation $r$ that for $\kappa=0$  
leads to a space-time separable solution 
$u^0(x,t)=\phi(x)\psi(t)$ of \eqref{eqn:JMGT_init_D_cutoff}, 
namely 
\begin{equation}\label{eqn:r}
\begin{aligned}
&r(x,t):= \phi(x){\psi^\tau}''(t)+c^2(\calAc \phi)(x)\psi^\tau + \bconst \phi(x){\psi^\tau}'(t)\\
&\text{where }\psi^\tau=\tau\psi'+\psi
\end{aligned}
\end{equation}
with some given functions $\phi(x)$, $\psi(t)$ such that 
\begin{equation}\label{eqn:Lchi}
\begin{aligned}
&\phi\in\mathcal{D}(\calAc ), \quad \tfrac{1}{\phi}, \,\tfrac{1}{\Delta\phi}\in \dot{H}^s(\Omega)\text{ for some }s>\tfrac{d}{2}\\ 
&\psi(0)=\psi'(0)=\psi''(0)=0\\ 
&\nu_m:=\widehat{f}_1(p_m^-)\widehat{f}_2(p_m^+)-\widehat{f}_1(p_m^+)\widehat{f}_2(p_m^-)\not=0 \quad \mbox{ for all }m\in\mathbb{N}\\
&\text{ for }
f_1(t):=\tau\psi'(t)+\psi(t), \quad
f_2(t):=\cutoff(|\phi|_\Ltwo^2(\tau\psi'(t)+\psi(t))^2)(\psi^2)''(t).
\end{aligned}
\end{equation}
With this, the linearization of \eqref{eqn:JMGT_init_D_cutoff} at $\kappa=0$, $c_0^2$, $b_0=\bconst$ becomes
\begin{equation*}
\begin{aligned}
\underline{du}^\tau_{tt}+c^2\calAc  \underline{du}^\tau + \bconst \underline{du}^\tau_t &= 
f_2(t) \underline{d\kappa}(x)\phi^2(x) -f_1(t) \underline{dc}_0^2 \Delta \phi(x)
\quad \mbox{ in }\Omega\times(0,T)\\
\underline{du}(0)=0, \quad \underline{du}^\tau(0)=0, \quad \underline{du}^\tau_t(0)&=0 \quad \mbox{ in }\Omega\\
\underline{du}^\tau = \tau \underline{du}_t+\underline{du}.
\end{aligned}
\end{equation*}
Moreover, using separation of variables and the relaxation functions we can write 
\begin{equation}\label{Lapldu}
\Lapl{\underline{du}^\tau}(x,z)= \sum_{j=1}^\infty \tfrac{1}{\omega_{\lambda_j}(z)} \sum_{k\in K^{\lambda_j}}
\Bigl(\widehat{f}_2(z)\langle \underline{d\kappa}\phi^2, \varphi_j^k\rangle 
+\widehat{f}_1(z)\langle\underline{dc}^2_0(-\Delta\phi), \varphi_j^k\rangle\Bigr)\varphi_j^k(x). 
\end{equation}
Due to Lemma~\ref{lem:what}, taking the residues at the poles singles out the contributions of the individual eigenspaces to this sum, that is, after evaluation by the observation operator
\begin{equation}\label{eqn:idres}
\begin{aligned}
&\textup{Res}(\Lapl{\obsop\underline{du}};p_m^\pm)\\
&= \textup{Res}(\tfrac{1}{\omega_{\lambda_m}};p_m^\pm) 
\sum_{k\in K^{\lambda_m}} \Bigl(\Lapl{f_2}(p_m^\pm)\langle \underline{d\kappa}\phi^2, \varphi_m^k\rangle
+\Lapl{f_1}(p_m^\pm)\langle\underline{dc}^2_0(-\Delta\phi), \varphi_m^k\rangle
\Bigr) \obsop\varphi_m^k . 
\end{aligned}
\end{equation}

This allows us to exploit some cancellations and write the linearization of the forward operator at $\kappa=0$, $c_0^2=0$, $b_0=\bconst$ as
\begin{equation}\label{idforward}
\begin{aligned}
D\widetilde{\mathbb{F}}(0,c_0^2)(\dkappa,\underline{dc}_0^2)
=\Bigl(&(\tfrac{\nu_m}{\imath\sqrt{4c^2\lambda_m-\bconst^2}}
B_{\lambda_m}[(\langle \underline{d\kappa}\phi^2, \varphi_m^k\rangle)_{k\in K_{\lambda_m}}])_{m\in\mathbb{N}},\\
&(-\tfrac{\nu_m}{\imath\sqrt{4c^2\lambda_m-\bconst^2}}
B_{\lambda_m}[(\langle \underline{dc}^2_0(-\Delta\phi), \varphi_m^k\rangle)_{k\in K_{\lambda_m}}])_{m\in\mathbb{N}}
\Bigr)
\end{aligned}
\end{equation}
with $\nu_m$ as in \eqref{eqn:Lchi}.
As a consequence of Lemma~\ref{lem:resnon0} and \eqref{eqn:Lchi}, the factors $\frac{\nu_m}{\imath\sqrt{4c^2\lambda_m-\bconst^2}}$ appearing here do not vanish. 
Under condition 
\begin{equation}\label{condmu}
|\nu_m|^2\geq c_\nu \lambda_m^{s-1}, \quad m\in\mathbb{N} \text{ for some }s>\frac{d}{2}
\end{equation}
we have 
\begin{equation}\label{idforward1}
\begin{aligned}
&\|D\widetilde{\mathbb{F}}(0,c_0^2)(\dkappa,\underline{dc}_0^2)\|_{\mathbb{Y}}\\
&=\sum_{m\in\mathbb{N}}\tfrac{\lambda_m^2 |\nu_m|^2}{{4c^2\lambda_m-\bconst^2}}
\sum_{k\in K_{\lambda_m}} \Bigl(|\langle \underline{d\kappa}\phi^2, \varphi_m^k\rangle|^2
+|\langle \underline{dc}^2_0(-\Delta\phi),, \varphi_m^k\rangle|^2\Bigr)\\
&\geq \tfrac{c_\nu}{c^2}\Bigl(\|\underline{d\kappa}\phi^2\|_{\dot{H}^s(\Omega)}^2
+\|\underline{dc}^2_0(-\Delta\phi)\|_{\dot{H}^s(\Omega)}^2\Bigr)
\end{aligned}
\end{equation}
and thus, $D\widetilde{\mathbb{F}}(0,c_0^2)$ is an isomorphism between 
$\mathbb{X}_1\times\mathbb{X}_2$ and $\mathbb{Y}$, where
\[
\mathbb{X}_1:=\{\underline{d\kappa}\in L^\infty(\Omega)\, : \ \underline{d\kappa}\phi^2\in\dot{H}^s(\Omega)\}, \quad 
\mathbb{X}_2:\{\underline{dc}_0^2\in L^\infty(\Omega)\, : \ \underline{dc}^2_0(-\Delta\phi)\in\dot{H}^s(\Omega)\},
\]
so that $\mathbb{X}_1\times\mathbb{X}_2\times\{\bconst\}\subseteq X$.

\subsection{Linearized uniqueness of $(\kappa,c_0^2)$ for constant $b_0(x)\equiv\bconst$}\label{sec:uniqueness_kappac0}
From the isomorphism property of $D\widetilde{\mathbb{F}}(0,c_0^2)$, we conclude its injectivity and thus injectivity of $(\obsop\circ \partial_\kappa S(0,c_0^2,\bconst),\obsop\circ \partial_{c_0^2} S(0,c_0^2,\bconst)$.

\subsection{Linearized uniqueness of $(\kappa,b_0)$ for fixed $c_0(x)$}\label{sec:uniqueness_kappab0}
Similarly to the above isomorphism proof, also injectivity of $(\obsop\circ \partial_\kappa S(0,c_0^2,\bconst),\obsop\circ \partial_{b_0} S(0,c_0^2,\bconst)$ can be shown. 
To this end, we keep \eqref{eqn:r} and replace $f_1(t):=\tau\psi'(t)+\psi(t)$ in \eqref{eqn:Lchi} by $f_1(t):=\tau\psi''(t)+\psi'(t)$.
With this, the linearization of \eqref{eqn:JMGT_init_D_cutoff} at $\kappa=0$, $c_0^2$, $b_0=\bconst$ becomes
\begin{equation*}
\begin{aligned}
\underline{du}^\tau_{tt}+c^2\calAc  \underline{du}^\tau + \bconst \underline{du}^\tau_t &= 
f_2(t) \underline{d\kappa}(x)\phi^2(x) +f_1(t) \underline{db}_0 \phi(x)
\quad \mbox{ in }\Omega\times(0,T)\\
\underline{du}(0)=0, \quad \underline{du}^\tau(0)=0, \quad \underline{du}^\tau_t(0)&=0 \quad \mbox{ in }\Omega\\
\underline{du}^\tau = \tau \underline{du}_t+\underline{du}.
\end{aligned}
\end{equation*}
and so we just have to replace $\underline{dc}^2_0(-\Delta\phi)$ by $\underline{db}_0\phi$ in \eqref{Lapldu} and \eqref{eqn:idres}, \eqref{idforward}, \eqref{idforward1}.

\subsection{Local uniqueness of $\kappa$}\label{sec:uniqueness_kappa}
The fact that the operator $D\widetilde{\mathbb{F}}(0,c_0^2)$ is an isomorphism between 
$\mathbb{X}_1\times\mathbb{X}_2$ and $\mathbb{Y}$ also implies that $\partial_\kappa\widetilde{\mathbb{F}}(0,c_0^2,\bconst):\mathbb{X}_1\to\mathbb{Y}$ is a isomorphism.
This together with the differentiability result Corollary~\ref{cor:F} allows us to apply the  Inverse Function Theorem (see, e.g., \cite[Section 4.8]{Zeidler:1995})
and conclude that $\mathbb{F}(\cdot,c_0^2,\bconst):\mathbb{X}_1\to\mathbb{Y}$ is locally bijective (with a continuous inverse between these spaces) and thus, due to completeness of the eigensystem and our assumption that $\phi\not=0$ in $\Omega$ the full forward map $\kappa\to\obsop u$ is locally injective.
More precisely, we even have local well-posedness of the inverse problem in the $\mathbb{X}_1$-$\mathbb{Y}$ topologies.

\begin{theorem}\label{thm:uniqueness_kappa}
Assume that the conditions of Proposition~\ref{prop:S} are satisfied, that \eqref{eqn:ass_inj_Sigma_rem} holds, $r$ takes the form \eqref{eqn:r} with 
$\phi$, $\psi$ satisfying \eqref{eqn:Lchi}, \eqref{condmu}, and that $c_0(x)$, $b_0(x)\equiv \bconst$ are the actual sound speed and damping coefficients. Then there exists $\rho>0$ such that for any $\kappa ,\tilde{\kappa}\in B_\rho^{\mathbb{X}_1}(0)$, 
equality of the observations $\obsop S(\kappa,c_0^2,\bconst)(t)=\obsop S(\tilde{\kappa},c_0^2,\bconst)(t)$, $t>0$ implies $\kappa=\tilde{\kappa}$.
\end{theorem}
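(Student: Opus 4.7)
The plan is to apply the Inverse Function Theorem to the partial map $\widetilde{\mathbb{F}}(\cdot,c_0^2,\bconst):\mathbb{X}_1\to\mathbb{Y}$ at the point $\kappa=0$, and then to transfer local bijectivity back from the residue-based forward operator $\widetilde{\mathbb{F}}$ to the full observation map $\kappa\mapsto\obsop\circ S(\kappa,c_0^2,\bconst)$. All the hard analytic work has already been done: Corollary~\ref{cor:F} supplies Fr\'{e}chet differentiability of $\widetilde{\mathbb{F}}$ with Lipschitz continuous derivative, and the computation in Section~\ref{sec:isomorphism_kapp_c0}, especially the lower bound \eqref{idforward1}, yields that $\partial_\kappa\widetilde{\mathbb{F}}(0,c_0^2,\bconst):\mathbb{X}_1\to\mathbb{Y}$ is a bounded linear isomorphism (this follows by reading off only the first component of $D\widetilde{\mathbb{F}}(0,c_0^2)$, which decouples from $\underline{dc}_0^2$ and is bounded from below by $\tfrac{c_\nu}{c^2}\|\dkappa\phi^2\|_{\dot{H}^s(\Omega)}^2$, with boundedness from above coming from Corollary~\ref{cor:F}).

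With these two ingredients in hand, I invoke the IFT as in \cite[Section 4.8]{Zeidler:1995} to produce radii $\rho>0$ and $\tilde{\rho}>0$ and open neighbourhoods $\mathcal{B}_\rho^{\mathbb{X}_1}(0)$ and $\mathcal{B}_{\tilde{\rho}}^{\mathbb{Y}}(\widetilde{\mathbb{F}}(0,c_0^2,\bconst))$ on which $\widetilde{\mathbb{F}}(\cdot,c_0^2,\bconst)$ is a homeomorphism with continuous inverse. In particular, $\widetilde{\mathbb{F}}(\cdot,c_0^2,\bconst)$ is injective on $\mathcal{B}_\rho^{\mathbb{X}_1}(0)$, and one even has the local Lipschitz stability estimate
\begin{equation*}
\|\kappa-\tilde{\kappa}\|_{\mathbb{X}_1}\leq C\|\widetilde{\mathbb{F}}(\kappa,c_0^2,\bconst)-\widetilde{\mathbb{F}}(\tilde{\kappa},c_0^2,\bconst)\|_{\mathbb{Y}}
\end{equation*}
for some $C>0$ and all $\kappa,\tilde{\kappa}\in\mathcal{B}_\rho^{\mathbb{X}_1}(0)$.

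To conclude uniqueness of $\kappa$ from the raw observations, I use the factorisation
$\widetilde{\mathbb{F}}=\widetilde{\mathcal{R}}_{c_0^2,\bconst}\circ\obsop\circ S$. If $\obsop S(\kappa,c_0^2,\bconst)(t)=\obsop S(\tilde{\kappa},c_0^2,\bconst)(t)$ for all $t>0$, then applying the linear map $\widetilde{\mathcal{R}}_{c_0^2,\bconst}$ to both sides, which is well defined because the exponential decay of $\utau$ in Corollary~\ref{cor:E0decay} together with the switching to a linear PDE after time $T_*$ gives absolute convergence of the relevant Laplace integrals and the explicit residue identities \eqref{eqn:Resjk}, yields $\widetilde{\mathbb{F}}(\kappa,c_0^2,\bconst)=\widetilde{\mathbb{F}}(\tilde{\kappa},c_0^2,\bconst)$. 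The injectivity just established then forces $\kappa=\tilde{\kappa}$.

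The only genuinely delicate step, and where I would spend the most care, is verifying that the factorisation $\widetilde{\mathbb{F}}=\widetilde{\mathcal{R}}_{c_0^2,\bconst}\circ\obsop\circ S$ really is the right one to exploit: one must check that the poles $p_m^\pm$ used to define $\widetilde{\mathcal{R}}_{c_0^2,\bconst}$ are indeed the actual poles of $\Lapl{\obsop u}$ (this is guaranteed by the assumption in Section~\ref{sec:isomorphism_kapp_c0} that $c_0,\bconst$ are the true coefficients, combined with Lemma~\ref{lem:what} and the exponential decay), and that the radius $\rho$ from the IFT is small enough that the hypotheses of Proposition~\ref{prop:S} and Corollary~\ref{cor:E0decay}, most notably the smallness assumption \eqref{smalldata} and the lower bound \eqref{blarge} on $b$, remain in force throughout $\mathcal{B}_\rho^{\mathbb{X}_1}(0)$; shrinking $\rho$ if necessary handles this. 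The assumption $\phi\neq 0$ in $\Omega$ then ensures $\mathbb{X}_1$ is genuinely a space of $\kappa$'s rather than a degenerate object, so local injectivity in $\mathbb{X}_1$ transfers to local injectivity of the map $\kappa\mapsto\obsop u$.
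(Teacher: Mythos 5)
Your proposal is correct and follows essentially the same route as the paper: the isomorphism property of $\partial_\kappa\widetilde{\mathbb{F}}(0,c_0^2,\bconst)$ from \eqref{idforward1}, the differentiability from Corollary~\ref{cor:F}, the Inverse Function Theorem, and the factorisation $\widetilde{\mathbb{F}}=\widetilde{\mathcal{R}}_{c_0^2,\bconst}\circ\obsop\circ S$ to pass from equal observations to equal residues and hence $\kappa=\tilde{\kappa}$. Your explicit spelling-out of the transfer step (applying $\widetilde{\mathcal{R}}_{c_0^2,\bconst}$ to both sides) and of the stability estimate only makes precise what the paper states more tersely.
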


This does not require $c_0(x)$ nor $\calAc $ to be known, just the conditions \eqref{eqn:r}, \eqref{eqn:Lchi}, \eqref{eqn:ass_inj_Sigma_rem} to be satisfied by the (possibly unknown) operator $\calAc $ and its eigensystem.

Existence of an excitation $r$ according to \eqref{eqn:r} such that $e^{\alpha/2\cdot} r\in L^2(\Ltwo)$ and \eqref{eqn:Lchi} holds might be hard to reconcile with  \eqref{condmu}, in particular in dimensions larger than one, where \eqref{condmu} requires growth of $\nu_n$. However, we can use the result to recover an arbitrary number $M$ of generalized Fourier components of $\kappa\phi^2$, assuming only \eqref{eqn:r}, $e^{\alpha/2\cdot} r\in L^2(\Ltwo)$ and \eqref{eqn:Lchi} to hold, by setting $c_\nu:=\max\{|\nu_m|^{-2}\lambda_m^{s-1}\, : \, m\in\{1,\ldots,M\}\}>0$ for a given $s>\frac{d}{2}$ and working in the space
$\mathbb{X}_{1M}:=\{\underline{d\kappa}\in L^\infty(\Omega)\, : \ \underline{d\kappa}\phi^2\in\text{span}(\varphi_j^k)_{j\in\{1,\ldots, M\},\, k\in K^{\lambda_j}}\}\subseteq \dot{H}^s(\Omega)$.

\begin{corollary}\label{cor:uniqueness_kappa_M}
Assume that the conditions of Proposition~\ref{prop:S} are satisfied, that \eqref{eqn:ass_inj_Sigma_rem} holds, $r$ takes the form \eqref{eqn:r} with 
$\phi$, $\psi$ satisfying \eqref{eqn:Lchi}, 
 and that $c_0(x)$, $b_0(x)\equiv \bconst$ are the actual sound speed and damping coefficients. Then there exists $\rho>0$ such that for any $\kappa ,\tilde{\kappa}\in B_\rho^{\mathbb{X}_{1M}}(0)$, equality of the observations $\obsop S(\kappa,c_0^2,\bconst)(t)=\obsop S(\tilde{\kappa},c_0^2,\bconst)(t)$, $t>0$ implies $\kappa=\tilde{\kappa}$.
\end{corollary}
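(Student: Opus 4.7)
The plan is to mimic the proof of Theorem~\ref{thm:uniqueness_kappa}, restricted to the finite-dimensional subspace $\mathbb{X}_{1M}$: because only the $M$ scalars $\nu_1,\ldots,\nu_M$ now enter, the growth hypothesis \eqref{condmu} -- which can be delicate in dimension $d\geq 2$ -- becomes automatic, since each $\nu_m$ is nonzero by \eqref{eqn:Lchi}.

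First I would observe that for $\dkappa\in\mathbb{X}_{1M}$ the generalized Fourier coefficients $\langle\dkappa\phi^2,\varphi_j^k\rangle$ vanish for $j>M$. Setting $\underline{dc}^2_0=0$ in \eqref{idforward} then shows that $\partial_\kappa\widetilde{\mathbb{F}}(0,c_0^2,\bconst)\dkappa$ has a vanishing second slot and a first slot supported on blocks $m\le M$, so its image lies in a finite-dimensional subspace $\mathbb{Y}_M\subset\mathbb{Y}$ of dimension $\sum_{j\le M}|K^{\lambda_j}|$. With
\[
c_\nu:=\min\{|\nu_m|^2\lambda_m^{-(s-1)}\,:\,1\le m\le M\}>0,
\]
the finite-sum version of the chain of estimates leading to \eqref{idforward1} yields
\[
\|\partial_\kappa\widetilde{\mathbb{F}}(0,c_0^2,\bconst)\dkappa\|_\mathbb{Y}^2
\ge \tfrac{c_\nu}{c^2}\,\|\dkappa\phi^2\|_{\dot{H}^s(\Omega)}^2.
\]
Combined with the injectivity of $\dkappa\mapsto\dkappa\phi^2$ on $\mathbb{X}_{1M}$ (which follows from $1/\phi\in\dot{H}^s(\Omega)$ in \eqref{eqn:Lchi} forcing $\phi\neq 0$ a.e.), together with invertibility of each $B_{\lambda_m}$ and the non-vanishing of $\nu_m$, a dimension count promotes injectivity to bijectivity: $\partial_\kappa\widetilde{\mathbb{F}}(0,c_0^2,\bconst)|_{\mathbb{X}_{1M}}:\mathbb{X}_{1M}\to\mathbb{Y}_M$ is an isomorphism of finite-dimensional spaces with an automatically bounded inverse.

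Next I would apply the Inverse Function Theorem \cite[Section 4.8]{Zeidler:1995} to the composition $P_M\circ\widetilde{\mathbb{F}}(\cdot,c_0^2,\bconst):B_\rho^{\mathbb{X}_{1M}}(0)\to\mathbb{Y}_M$, where $P_M:\mathbb{Y}\to\mathbb{Y}_M$ denotes the canonical projection. Its Fr\'{e}chet derivative at $\kappa=0$ coincides with the isomorphism above (the derivative already maps $\mathbb{X}_{1M}$ into $\mathbb{Y}_M$), and the required $C^1$ regularity is supplied by Corollary~\ref{cor:F}. The IFT then produces a radius $\rho>0$ on which $P_M\circ\widetilde{\mathbb{F}}(\cdot,c_0^2,\bconst)$ is locally bijective onto a neighborhood of $P_M\widetilde{\mathbb{F}}(0,c_0^2,\bconst)$ in $\mathbb{Y}_M$. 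In particular, any equality $\obsop S(\kappa,c_0^2,\bconst)=\obsop S(\tilde\kappa,c_0^2,\bconst)$ with $\kappa,\tilde\kappa\in B_\rho^{\mathbb{X}_{1M}}(0)$ propagates through the linear operators $\widetilde{\mathcal R}_{c_0^2,\bconst}$ and $P_M$ and yields $\kappa=\tilde\kappa$.

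The main obstacle I anticipate is not analytic but bookkeeping: verifying that truncation to $\mathbb{Y}_M$ is compatible with the $C^1$ regularity of Corollary~\ref{cor:F}, and that \eqref{eqn:Lchi} and \eqref{eqn:ass_inj_Sigma_rem} together really do supply the block-diagonal isomorphism on the truncated target. Once these points are checked, the remainder is a faithful replay of Theorem~\ref{thm:uniqueness_kappa}.
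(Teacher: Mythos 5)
Your proposal is correct and follows essentially the same route as the paper, whose own ``proof'' is just the remark preceding the corollary: over the finite index set $m\le M$ the lower bound \eqref{condmu} holds automatically with $c_\nu=\min_{m\le M}|\nu_m|^2\lambda_m^{-(s-1)}>0$ since each $\nu_m\neq0$ by \eqref{eqn:Lchi}, so the isomorphism argument and the Inverse Function Theorem apply on $\mathbb{X}_{1M}$. Your explicit introduction of the truncated target $\mathbb{Y}_M$ and the projection $P_M$ (so that the derivative is genuinely bijective, not merely injective into $\mathbb{Y}$) is a careful spelling-out of a step the paper leaves implicit, and is sound.
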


\section*{Acknowledgment}
This work was supported by the Austrian Science Fund FWF  http://dx.doi.org/10.13039/501100002428 under the grant P36318.

\end{document}